\documentclass[12pt,reqno]{amsart}
\usepackage{amsmath,amssymb,amsthm,mathtools,wasysym,calc,verbatim,tikz,pgfplots,url,mathrsfs,fullpage,bbm}
\numberwithin{equation}{section}
\usepackage{comment}

\newtheorem{mydef}{Definition}[section]
\newtheorem{myrem}{Remark}[section]
\newtheorem{mytheo}{Theorem}[section]
\newtheorem{mylem}{Lemma}[section]
\newtheorem{mycoro}{Corollary}[section]

\newtheorem{myprop}{Proposition}[section]

\usepackage{enumerate}

\theoremstyle{definition}

\theoremstyle{remark}

\usepackage[colorlinks = true,
            linkcolor  = blue,
            citecolor  = blue,
            urlcolor   = blue,
            bookmarks  = true]{hyperref}

\theoremstyle{remark}

\numberwithin{equation}{section}

\usepackage[backend=biber, style=numeric, citestyle=numeric-comp]{biblatex}
\DeclareFieldFormat{labelnumberwidth}{\mkbibbrackets{#1}}
\DeclareNameAlias{author}{family-given}

\DeclareFieldFormat*{title}{#1}

\AtEveryBibitem{%
  \clearfield{issn}
  \clearfield{url}
  \clearfield{eprint}
}

\renewbibmacro{in:}{}

\DeclareFieldFormat[article]{volume}{\textbf{#1}}
\DeclareFieldFormat[article]{number}{\textbf{#1}}
\renewbibmacro*{volume+number+eid}{%
  \printfield{volume}
  \iffieldundef{number}
    {}%
    {\printtext[parens]{\printfield{number}}}
  \setunit{\addcomma\space}%
  \printfield{eid}}

\DeclareFieldFormat[journal]{journaltitle}{\mkbibemph{#1}}

\DeclareNameAlias{author}{family-given}
\DeclareNameAlias{editor}{family-given}

\DeclareDelimFormat{namedelim}{\addcomma\space}
\DeclareFieldFormat[article,incollection,inproceedings,book]{giveninit}{\mkbibnamegiven{#1}}

\DeclareNameFormat{family-given}{%
  \usebibmacro{name:family-given}
    {\namepartfamily}
    {\namepartgiveni} 
    {\namepartprefix}
    {\namepartsuffix}%
  \usebibmacro{name:andothers}}

\DeclareFieldFormat{pages}{#1}

 \allowdisplaybreaks[4]

\begin{document}

\title{Uniform Concentration for $\alpha$-subexponential Random Operators}

\author{Tiankun Diao}
\address{Shandong University,  Jinan, China.}
\email{tiankundiao@gmail.com}

\author{Xuanang Hu}
\address{Shandong University,  Jinan, China.}
\email{xuananghu7@gmail.com}

\author{Vladimir V. Ulyanov}
\address{Research University Higher School of Economics and Lomonosov Moscow State University, Moscow, Russia}
\email{vulyanov@cs.msu.ru }

\author{Hanchao Wang}
\address{ Shandong University, Jinan,  China.}
\email{wanghanchao@sdu.edu.cn}

\subjclass[2020]{60B11, 41A46, 46B20}

\begin{abstract}
Random matrices acting on structured sets play a fundamental role in high-dimensional geometry, compressed sensing, and randomized algorithms. Existing results primarily focus on subgaussian models, when random matrices act as near-isometries on sets with optimal tail behavior. Nevertheless, very often in applications we deal with distributions with heavy tails that are not subgaussian but have at least exponential-type tails.

In this work, we study random matrices $A$ whose rows (or columns) have 
$\alpha$-subexponential tail distributions with $\alpha\in(0,2]$. So subgaussian and sub-exponential models are included in as special cases. We establish concentration type inequality for $Ax$, where x belongs to the bounded subsets of 
$\mathbb{R}^n$
, showing that their geometric distortion is governed by Talagrand's functional of the set and depends on the tail parameter $\alpha$
. Our results extend the known optimal inequalities in the subgaussian regime
(
$\alpha=2$
)
, and provide new guarantees for heavier-tailed, yet exponentially integrable, random matrices.

These findings extend the theory of random matrices beyond the subgaussian framework. Moreover, they yield near-isometric embedding results applicable to dimension reduction and allow us to make robust high-dimensional inference under non-Gaussian measurements.

\end{abstract}

\keywords{ $\alpha$-subexponential random matrices; random measurement operators;
generic chaining; Talagrand $\gamma_\alpha$ functional.
}
\maketitle

\section{Introduction}

Random matrices play a central role in high-dimensional probability, compressed sensing, and randomized numerical linear algebra due to their remarkable ability to preserve geometric structure. A fundamental question in this area is to understand when a random linear map
\(
A \in \mathbb{R}^{m \times n}
\)
acts as a near-isometry on a given subset \(T \subset \mathbb{R}^n\), meaning that Euclidean norms of vectors in \(T\) are approximately preserved under the mapping \(x \mapsto Ax\). Such property underpins classical results including the Johnson--Lindenstrauss lemma and restricted isometry property.

When the random matrix $A$ has Gaussian entries, the near-isometric behavior of $A$  is by
now well understood and is known to be governed by geometric complexity of measures such as Gaussian
width or Gaussian complexity; see, for instance, \cite{vershynin2018high,schechtman2006two}. Extending these results beyond the Gaussian setting has attracted significant attention, motivated by both theoretical considerations and applications in which Gaussian randomness is either impractical or undesirable. Early progress in this direction focused on subgaussian random matrices, where concentration inequalities and chaining techniques were effectively applied; see, e.g., \cite{mendelson2008uniform,liaw2017simple}. There are also studies focusing on non-subgaussian scenarios. For example,  Adamczak et al \cite{adla2} obtained tail estimates for the norms of projections of sums of independent log-concave random vectors, and uniform versions of these in the form of tail estimates for operator norms of matrices and their sub-matrices in the setting of a log-concave ensemble.

Two complementary non-Gaussian frameworks have since emerged as particularly influential. The first considers matrices with independent, isotropic, subgaussian rows. In this row-wise model, sharp deviation bounds for the action of random matrices on sets were established, with optimal dependence on the subgaussian norm and tight tail behavior; see \cite{jeong2022sub}. The second framework focuses on matrices with independent subgaussian columns of fixed Euclidean norm. This column-wise model, developed in \cite{plan2025random}, captures critical phenomena arising in sparse and structured random matrices and reveals behavior that differs fundamentally from row-independent or entry-independent ensembles, including improved dependence on distributional parameters.

All frameworks rely critically on subgaussian assumptions that impose light tails and strong concentration properties. In many applications, however, random data exhibit heavier-tailed behavior that deviates from the subgaussian regime, yet still exhibit subexponential tails. Examples arise naturally in robust statistics, signal processing under impulsive noise, and randomized algorithms based on non-Gaussian sketches. This raises the following question:
\begin{quote}
To what extent are the nearly isometric properties of random matrices acting on sets preserved
when the sub-Gaussian assumptions are relaxed to distributions with exponential tails?
\end{quote}

We now present our main results.

\subsection{Row-wise \(\alpha\)-subexponential model}
Our first main result generalizes the row-wise framework introduced by Jeong  et al. \cite{jeong2022sub} to \(\alpha\)-subexponential random matrices.

\begin{mytheo} [Row-wise model]\label{thm:row}
Let \( B \in \mathbb{R}^{l \times m} \) be a fixed matrix and \( A \in \mathbb{R}^{m \times n} \) be a random matrix with zero mean. Assume $\alpha\in(0,2]$. Let rows of $A$ be independent isotropic and have \(\psi_\alpha\)-norm  (or quasi-norm when $\alpha<1$) bounded by $K$ uniformly. Let \( T \subset \mathbb{R}^n \) be a bounded set. Then
\[
\textsf{E} \sup_{x \in T} \left| \|BAx\|_2 - \|B\|_{HS} \|x\|_2 \right| \leq C(\alpha)K^{4/\alpha}  \|B\|_{op} \left( \gamma_\alpha(T)+rad(T) \right).
\]
Moreover, for any \( u \geq 0 \), with probability at least \( 1 - C \exp(-u^\alpha) \),

\[
\sup_{x \in T} \left| \|BAx\|_2 - \|B\|_{HS} \|x\|_2 \right| \leq C(\alpha)K^{4/\alpha}  \|B\|_{op} \left( \gamma_\alpha(T) + u \cdot \text{rad}(T) \right),
\]
where \( C(\alpha) \) is an absolute constant depending only on \(\alpha\); $\|B\|_{HS}$ and $\|B\|_{op}$ denote Frobenius and operator norm of $B$ respectively;
\(\gamma_\alpha(T)\) is a Talagrand's functional defined in Section~2, and $rad(T)$ is given by $\sup_{t\in T}\|t\|_2$; $\psi_\alpha$ norm (or quasi-norm when $\alpha<1$) is defined by
\[
\|\xi\|_{\psi_\alpha}
:= \inf \left\{ t>0 : \textsf{E}\exp\!\left( \left|\frac{\xi}{t}\right|^{\alpha} \right) \le 2 \right\}.
\]

\end{mytheo}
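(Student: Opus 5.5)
We follow the strategy of Jeong, Li, Plan and Yilmaz \cite{jeong2022sub}. The plan is to first prove an increment (mixed-tail) estimate for the process $Z_x:=\|BAx\|_2-\|B\|_{HS}\|x\|_2$, and then conclude by generic chaining adapted to $\psi_\alpha$ increments.

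\textbf{Single-point concentration.} For a fixed $x$ with $\|x\|_2=1$, write $BAx=\sum_{i} (A_i^\top x)\, b_i$, where $A_i$ are the rows of $A$ and $b_i$ the columns of $B$. Then
\[
\|BAx\|_2^2=\sum_{i,j}\langle b_i,b_j\rangle\, \xi_i\xi_j, \qquad \xi_i=\langle A_i,x\rangle .
\]
The $\xi_i$ are independent, mean zero, with unit variance by isotropy and $\|\xi_i\|_{\psi_\alpha}\le K$. This is a quadratic form (chaos) in independent $\psi_\alpha$ variables with matrix $B^\top B$, whose trace is $\|B\|_{HS}^2$. We apply a Hanson--Wright inequality for $\alpha$-subexponential variables (e.g.\ Götze--Sambale--Sinulis, or via decoupling plus moment bounds). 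This gives
\[
\mathsf P\bigl(\bigl|\|BAx\|_2^2-\|B\|_{HS}^2\bigr|>t\bigr)\le 2\exp\Bigl(-c\min\Bigl(\tfrac{t^2}{K^4\|B^\top B\|_{HS}^2},\bigl(\tfrac{t}{K^2\|B^\top B\|_{op}}\bigr)^{\alpha/2}\Bigr)\Bigr).
\]
Using $|a-b|\le |a^2-b^2|/b$ together with $\|B^\top B\|_{HS}\le \|B\|_{op}\|B\|_{HS}$, we then reduce to a tail bound for $\|BAx\|_2-\|B\|_{HS}$ at scale $K^{2}\|B\|_{op}$ with exponent $u^\alpha$-type tails. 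The small-deviation and large-deviation cases are handled separately, as in the standard "norm concentration" argument (Vershynin, Thm 3.1.1). The power $K^{4/\alpha}$ absorbs the normalisation needed to convert the mixed tail, together with the fact that $K\gtrsim 1$ by isotropy.

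\textbf{Increments.} The goal is to show
\[
\|Z_x-Z_y\|_{\psi_\alpha}\le C(\alpha)K^{4/\alpha}\|B\|_{op}\|x-y\|_2 .
\]
As in \cite{jeong2022sub}, we first treat unit vectors $x,y$. Set $u=(x-y)/\|x-y\|_2$ and $v=(x+y)/\|x+y\|_2$, and write
\[
\|BAx\|_2-\|BAy\|_2=\frac{\langle BA(x-y),BA(x+y)\rangle}{\|BAx\|_2+\|BAy\|_2}.
\]
There are two regimes. On the event where $\|BAx\|_2\gtrsim\|B\|_{HS}$, the bilinear chaos $\langle BAu,BAv\rangle$ is controlled by the same $\alpha$-Hanson--Wright inequality (note $\langle u,v\rangle=0$, so its mean vanishes). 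On the complementary, small-probability event, we use the trivial bound $\|BA(x-y)\|_2$, which is itself controlled by step one. General $x,y$ are then reduced to unit vectors by the homogeneity argument of \cite{jeong2022sub}: write $y=\lambda y'$ and use the triangle inequality together with $|\|y\|-\|x\||\le\|x-y\|$.

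This step is the main obstacle. The mixed tail has the heavier $\alpha/2$ regime coming from the chaos, and one must check that, after dividing by $\|B\|_{HS}$, the resulting tail is at least as good as $\exp(-(t/(K^{4/\alpha}\|B\|_{op}\|x-y\|))^\alpha)$ uniformly in the ratio $\|B\|_{HS}/\|B\|_{op}\ge1$. The first thing to try is to bound $\exp(-\min(s^2,s^{\alpha/2}\cdot r^{\alpha/2}))$ by $\exp(-s^{\alpha})$ when $r\ge1$, and to absorb the constants into $K^{4/\alpha}$.

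\textbf{Chaining.} With $\psi_\alpha$ increments relative to $\|\cdot\|_2$, the generic chaining bound for $\psi_\alpha$ processes (Talagrand) gives
\[
\mathsf E\sup_{x,y\in T}|Z_x-Z_y|\lesssim_\alpha K^{4/\alpha}\|B\|_{op}\,\gamma_\alpha(T),
\]
along with the tail version: with probability at least $1-C\exp(-u^\alpha)$, the supremum is at most $C K^{4/\alpha}\|B\|_{op}(\gamma_\alpha(T)+u\,\mathrm{diam}(T))$. To pass from differences to $\sup_x|Z_x|$, we fix $x_0\in T$ and use $|Z_{x_0}|$, whose bound comes from step one and contributes $K^{4/\alpha}\|B\|_{op}\,\mathrm{rad}(T)$ in expectation and $u\,\mathrm{rad}(T)$ in tail. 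Alternatively, we add $0$ to $T$, where $Z_0=0$. Since $\mathrm{diam}(T)\le 2\,\mathrm{rad}(T)$, both displayed statements follow.
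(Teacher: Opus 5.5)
Your overall route is the paper's. You use a single-point bound $\|\|BAx\|_2-\|B\|_{HS}\|_{\psi_\alpha}\lesssim K^2\|B\|_{op}$; the paper cites Sambale's Proposition 2.2, and your Hanson--Wright derivation reproduces it. Then come a Jeong et al.-style increment bound for unit vectors, homogeneity for general $x,y$, $\psi_\alpha$ generic chaining, and a fixed point of $T$ (or $0$).

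The gap is in the increment step, exactly where you flagged the obstacle, because the fix you propose is false. Take $\|B\|_{op}=1$, $r=\|B\|_{HS}$, and let $s$ be the level for $|Z_x-Z_y|/\|x-y\|_2$. Hanson--Wright at level $t\asymp s\|B\|_{HS}$ gives the tail $\exp(-c\min(s^2/K^4,(sr/K^2)^{\alpha/2}))$. Getting $(sr)^{\alpha/2}\ge s^\alpha$ requires $r\gtrsim s$, not merely $r\ge 1$; for instance $\alpha=1$, $r=1$, $s=100$ gives exponent $10$, not $100$. So the event split does not yield $\psi_\alpha$ tails for $s\gg\|B\|_{HS}$. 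On the good event, bounding the denominator below by $\|B\|_{HS}$ and applying Hanson--Wright to the chaos is genuinely too weak there. The chaos itself only has tails $\exp(-c(t/K^2)^{\alpha/2})$ at large $t$. Likewise, the probability of the bad event is at most $2\exp(-c(\|B\|_{HS}/K^2)^\alpha)$. That beats $\exp(-c(s/K^2)^\alpha)$ only when $s\lesssim\|B\|_{HS}$.

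The missing idea is to split on the level $s$, not only on an event. For $s\ge 2\|B\|_{HS}$, drop the chaos and use $|Z_x-Z_y|\le\|BA(x-y)\|_2$ on the whole space. This gives $\textsf{P}(\|BAu\|_2\ge s)\le\textsf{P}(\|BAu\|_2-\|B\|_{HS}\ge s/2)\le 2\exp(-c(s/K^2)^\alpha)$ by the single-point bound. For $s<2\|B\|_{HS}$ you have $r>s/2$, and three estimates follow:
\begin{itemize}
\item the heavy chaos regime gives $(sr/K^2)^{\alpha/2}\gtrsim (s/K)^\alpha$;
\item the Gaussian regime gives $s^2/K^4\ge s^\alpha/K^4$ for $s\ge 1$;
\item the event $\{\|BAx\|_2\le\|B\|_{HS}/2\}$ costs at most $2\exp(-c(s/(4K^2))^\alpha)$.
\end{itemize}
This is how the paper, following Jeong et al., gets $\exp(-cs^\alpha/K^4)$ and hence $K^{4/\alpha}$.

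A smaller point: the $\alpha$-Hanson--Wright inequality applies to $X^\top MX$ with $X$ having independent coordinates. It does not apply directly to $\langle BAu,BAv\rangle$, since $\langle A_i,u\rangle$ and $\langle A_i,v\rangle$ are dependent. You need the polarization $2\langle BAu,BAv\rangle=\|BA(u+v)\|_2^2-\|BAu\|_2^2-\|BAv\|_2^2$, as the paper does. Each term is centred because $\textsf{E}\|BAw\|_2^2=\|w\|_2^2\|B\|_{HS}^2$ and $u\perp v$. Your chaining step and the passage to $\sup_{x\in T}|Z_x|$ are fine.
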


This theorem extends existing deviation inequalities for subgaussian matrices on sets to the more general class of \(\alpha\)-subexponential distributions, while preserving the correct dependence on the geometry of \(T\).

When $B$ is the identity matrix, we have the following corollary.

\begin{mycoro}\label{coro:row}
    Let \( A \in \mathbb{R}^{m \times n} \) be a random matrix with zero mean. Assume $\alpha\in(0,2]$. Let rows of $A$ be independent isotropic and have \(\psi_\alpha\)-norm  (or quasi-norm when $\alpha<1$) bounded by $K$ uniformly. Let \( T \subset \mathbb{R}^n \) be a bounded set. Then
\[
\textsf{E} \sup_{x \in T} \left| \|Ax\|_2 - \sqrt{m} \|x\|_2 \right| \leq C(\alpha)K^{4/\alpha}   \left( \gamma_\alpha(T)+rad(T) \right).
\]
Moreover, for any \( u \geq 0 \), with probability at least \( 1 - Ce^{-u^\alpha} \),

\[
\sup_{x \in T} \left| \|Ax\|_2 - \sqrt{m} \|x\|_2 \right| \leq C(\alpha)K^{4/\alpha}   \left( \gamma_\alpha(T) + u \cdot \text{rad}(T) \right).
\]
\end{mycoro}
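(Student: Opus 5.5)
This corollary is the special case $B=I_m$ of Theorem~\ref{thm:row}, so the plan is to specialize and then check the norms. Taking $l=m$ and $B=I_m\in\mathbb{R}^{m\times m}$, we have $BAx=Ax$ for every $x$. Both conclusions of Theorem~\ref{thm:row} then hold verbatim with $\|BAx\|_2$ replaced by $\|Ax\|_2$: the expectation bound, and the tail bound with probability at least $1-C\exp(-u^\alpha)$. The hypotheses on $A$ are exactly those of the corollary: zero mean, independent isotropic rows, $\psi_\alpha$ (quasi-)norm at most $K$, $\alpha\in(0,2]$, and $T$ bounded.

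It remains to compute the two norms of the identity:
\[
\|I_m\|_{HS}=\Bigl(\sum_{i=1}^m 1\Bigr)^{1/2}=\sqrt{m},\qquad \|I_m\|_{op}=1 .
\]
Substituting these, the centering term $\|B\|_{HS}\|x\|_2$ becomes $\sqrt{m}\,\|x\|_2$. The prefactor $C(\alpha)K^{4/\alpha}\|B\|_{op}$ becomes $C(\alpha)K^{4/\alpha}$. This gives both displayed inequalities with the same constants $C(\alpha)$ and $C$ as in Theorem~\ref{thm:row}.

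There is no real obstacle. The only point to note is that Theorem~\ref{thm:row} places no restriction on $l$ relative to $m$, so the square choice $l=m$ is admissible. All the substantive work, namely the generic chaining with the $\gamma_\alpha$ functional and the $\alpha$-subexponential concentration of $\|BAx\|_2$, is contained in the proof of the theorem itself.
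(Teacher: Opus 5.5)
Your proposal is correct and follows the paper's route exactly: the paper states this corollary immediately after Theorem~\ref{thm:row} as the special case $B=I_m$. There, $\|I_m\|_{HS}=\sqrt{m}$ turns the centering term into $\sqrt{m}\|x\|_2$ and $\|I_m\|_{op}=1$ removes the operator-norm factor, so nothing beyond the theorem itself is needed.
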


\subsection{Column-wise \(\alpha\)-subexponential model}

Our second main result generalizes the column-wise framework introduced by Plan and Vershynin \cite{plan2025random} to \(\alpha\)-subexponential random matrices.

\begin{mytheo}
[Column-wise model]\label{thm:column}
Let \(A \in \mathbb{R}^{m \times n}\) be a random matrix whose columns \(A_i\) are independent, mean-zero random vectors in \(\mathbb{R}^m\). Assume that \(\|A_i\|_2 = 1\) almost surely and that the columns have \(\psi_\alpha\)-norm (or quasi-norm when $\alpha<1$) bounded by $K$ for some \(\alpha\in(0,2]\). Then,
\[
\textsf{E}\sup_{x \in T}
\left|
\|Ax\|_2 - \|x\|_2
\right|
\;\le\;
C(\alpha)K \, \left(\gamma_\alpha(T)+rad(T)\right).
\]
Moreover, for any \( u \geq 0 \), with probability at least \( 1 - Ce^{-u^\alpha} \), we have
\[
\sup_{x \in T}
\left|
\|Ax\|_2 - \|x\|_2
\right|
\;\le\;
C(\alpha)K \, \left(\gamma_\alpha(T)+ u\cdot rad(T)\right).
\]
\end{mytheo}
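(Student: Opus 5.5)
We follow the strategy of Plan and Vershynin \cite{plan2025random}: first prove an increment inequality for the process $Z_x := \|Ax\|_2 - \|x\|_2$, then chain it with generic chaining for mixed tails. Write $Ax = \sum_i x_i A_i$. Since $\|A_i\|_2=1$ and the columns are independent and mean zero,
\[
\|Ax\|_2^2 - \|x\|_2^2 = \sum_{i\neq j} x_i x_j \langle A_i, A_j\rangle .
\]
This is an off-diagonal chaos in the independent vectors $A_1,\dots,A_n$, with scalar coefficients $x_ix_j$.

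\textbf{Step 1: single-vector concentration.} We show that for fixed $x$ with $\|x\|_2=1$,
\[
\|\, \|Ax\|_2 - 1\,\|_{\psi_\alpha}\le C(\alpha)K.
\]
We decouple the chaos, which replaces $\langle A_i,A_j\rangle$ by $\langle A_i,A'_j\rangle$. Conditioning on $A'$, the sum becomes $\langle \sum_i x_iA_i, v\rangle$ with $v=\sum_j x_jA'_j$. This is a sum of independent mean-zero $\psi_\alpha$ variables with scale $K|x_i|\|v\|_2$. For $\alpha\ge1$ we apply Bernstein's inequality. For $\alpha<1$ we apply the $\alpha$-subexponential Bernstein-type inequality (Gluskin--Kwapie\'n / Talagrand type). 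In both cases $|\|Ax\|^2-1|$ is controlled by $K\|v\|_2$ plus lower-order terms.

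Combined with the a priori bound $\textsf{E}\|v\|_2^2=1$, this yields a tail for $\|Ax\|_2^2-1$. The passage from squared norms to norms uses the elementary inequality $|z-1|\le |z^2-1|$ for $z\ge0$. This is exactly the argument of \cite{plan2025random}, where the key point is that the variance proxy of the linear part is $\|x\|_2^2$ with no $K^2$ loss; we mimic it.

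\textbf{Step 2: increments.} Next we show that for $x,y$,
\[
\|Z_x-Z_y\|_{\psi_\alpha}\le C(\alpha)K\|x-y\|_2 .
\]
By homogeneity it suffices to treat unit $x$ and $y$ (the case $\|x\|\neq\|y\|$ reduces to this via the triangle inequality and the bound $|\|Ax\|-\|Ay\||\le\|A(x-y)\|$ applied to the radial part). For unit vectors we use the identity
\[
\|Ax\|_2-\|Ay\|_2=\frac{\langle A(x-y),A(x+y)\rangle}{\|Ax\|_2+\|Ay\|_2}.
\]
The numerator is again a decoupled chaos; it is linear in $x-y$ given the other factor. We split according to whether $\|Ax\|_2+\|Ay\|_2\ge 1/2$, which holds with high probability by Step 1, and otherwise use Step 1 directly. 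This is the step I expect to be the main obstacle. For $\alpha<1$ the $\psi_\alpha$ quasi-norm lacks a triangle inequality; we will handle this by working with moment characterizations $\|\xi\|_p\lesssim p^{1/\alpha}$ throughout, which are subadditive up to constants.

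\textbf{Step 3: chaining.} With the increment condition of Step 2 in hand, we apply the generic chaining tail bound for processes with $\psi_\alpha$ increments:
\[
\sup_{x,y\in T}|Z_x-Z_y|\le C(\alpha)K\bigl(\gamma_\alpha(T)+u\,\mathrm{diam}(T)\bigr)\quad\text{with probability }\ge 1-Ce^{-u^\alpha}.
\]
We add a fixed point $x_0\in T$, controlled by Step 1 via $|Z_{x_0}|\le CKu\|x_0\|_2$. Using $\mathrm{diam}\le2\,rad$ gives the high-probability statement. Integrating the tail over $u$ then gives the expectation bound.
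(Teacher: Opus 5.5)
\textbf{The gap is in the last step of your Step~1.} After decoupling and conditioning on $A'$ you get
\[
\bigl\|\,\|Ax\|_2^2-1\bigr\|_p\le C\,\|\langle Ax,A'x\rangle\|_p\le C(\alpha)Kp^{1/\alpha}\,\bigl\|\,\|A'x\|_2\bigr\|_p .
\]
So you need every moment of $\|v\|_2=\|A'x\|_2$, not only the second. The bound $\textsf{E}\|v\|_2^2=1$ gives only $\textsf{P}(\|v\|_2>R)\le R^{-2}$. Averaging the conditional tail $2\exp(-c(t/(K\|v\|_2))^\alpha)$ over such a $v$ can give decay as slow as $(K/t)^2$, which is not a $\psi_\alpha$ tail. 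Since $\|A'x\|_2$ has the same law as $\|Ax\|_2$, the quantity you are estimating reappears on the right-hand side. The missing idea is a bootstrap, and it is the core of the paper's Lemma~\ref{lem:Ax-1}.

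The paper sets $f(p)=\|\,\|Ax\|_2^2-1\|_p$ and uses $\|\,\|A'x\|_2\|_p=\|\,\|Ax\|_2^2\|_{p/2}^{1/2}\le(1+f(p/2))^{1/2}$. This turns the display into the recursion $f(p)\le CKp^{1/\alpha}\sqrt{1+f(p/2)}$. It starts from $f(2)\le CK2^{1/\alpha}$, which is the only place $\textsf{E}\|Ax\|_2^2=1$ enters, and iterates along $p=2^n$. For general $K$ the induction gives $f(p)\lesssim Kp^{1/\alpha}+K^2p^{2/\alpha}$. To pass to $\|Ax\|_2-1$ you then need both $|z-1|\le|z^2-1|$ and $|z-1|^2\le|z^2-1|$ for $z\ge0$, i.e.\ $\|\,\|Ax\|_2-1\|_p\le\min\{f(p),f(p/2)^{1/2}\}\lesssim Kp^{1/\alpha}$. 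The first inequality alone, as you wrote it, leaves a $K^2p^{2/\alpha}$ term that is not of $\psi_\alpha$ type.

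\textbf{Your Step~2 differs from the paper, and it is the better route.} The paper's Theorem~\ref{thm:alpha_increase} uses only $|Z_x-Z_y|\le\|A(x-y)\|_2+\|x-y\|_2$ together with Lemma~\ref{lem:Ax-1}. That gives $C(\alpha)(1+K)\|x-y\|_2$, and the paper then removes the $1$ by assuming $K\ge1$ ``by homogeneity''. Under $\|A_i\|_2=1$ that reduction is not available. Since $|\langle A_i,x\rangle|\le1$, the column norm is automatically at most $(\log 2)^{-1/\alpha}$, so assuming $K\ge1$ amounts to replacing $K$ by $\max\{K,1\}$. The interesting regime is $K\asymp m^{-1/2}$ (e.g.\ columns uniform on $S^{m-1}$), and there the crude bound loses a factor $1/K$.

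Your quotient identity does capture linear dependence on $K$. For unit $x,y$, set $u=(x-y)/\|x-y\|_2$ and $v=x+y$.
\begin{enumerate}
\item The numerator equals $\|x-y\|_2\sum_{i\neq j}u_iv_j\langle A_i,A_j\rangle$; the diagonal vanishes because $\langle u,v\rangle=0$ and $\|A_i\|_2=1$.
\item Decoupling, plus Step~1 applied to $A'v$, bounds its $p$-th moment by $\lesssim Kp^{1/\alpha}(1+Kp^{1/\alpha})\|x-y\|_2$.
\item For $\textsf{P}(|Z_x-Z_y|\ge sK\|x-y\|_2)$ with $s\lesssim1/K$, work on $\{\|Ax\|_2\ge1/2\}$. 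Its complement has probability at most $2e^{-c/K^{\alpha}}\le2e^{-c's^{\alpha}}$.
\item For $s\gtrsim1/K$, the crude bound $\|A(x-y)\|_2$ together with Step~1 suffices.
\end{enumerate}
This mirrors the paper's row-model Lemma~\ref{lem1}.

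One correction is needed: do the reduction to unit vectors by exact homogeneity, as in Step~2 of Lemma~\ref{lem1}. That is, use $Z_y-Z_{\bar y}=(\|y\|_2-1)Z_{\bar y}$ with $\|Z_{\bar y}\|_{\psi_\alpha}\le C(\alpha)K$ from Step~1. Bounding the radial part through $|\|Ay\|_2-\|A\bar y\|_2|\le\|A(y-\bar y)\|_2$, as you propose, reintroduces the same $(1+K)$ loss. Step~2 also depends on the full moment control of $\|A'v\|_2$, so it too rests on the bootstrap missing from Step~1. Step~3 is fine as stated.
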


Theorem~\ref{thm:column} extends  Theorem 1.3 proved in \cite{plan2025random}. Moreover, our result is based on a fundamentally different approach.

It follows immediately that

\begin{mycoro}\label{coro:column}
    Let \(A \in \mathbb{R}^{m \times n}\) be a random matrix whose columns \(A_i\) are independent, mean-zero random vectors in \(\mathbb{R}^m\). Assume that \(\|A_i\|_2 = \lambda>0\) almost surely and that the columns have \(\psi_\alpha\)-norm (or quasi-norm when $\alpha<1$) bounded by $K$ for some \(\alpha\in(0,2]\). Then
\[
\textsf{E}\sup_{x \in T}
\left|
\|Ax\|_2 - \lambda\|x\|_2
\right|
\;\le\;
C(\alpha)K \, \left(\gamma_\alpha(T)+rad(T)\right).
\]
Moreover, for any \( u \geq 0 \), with probability at least \( 1 - Ce^{-u^\alpha} \), we have
\[
\sup_{x \in T}
\left|
\|Ax\|_2 - \lambda\|x\|_2
\right|
\;\le\;
C(\alpha)K \, \left(\gamma_\alpha(T)+ u\cdot rad(T)\right).
\]
\end{mycoro}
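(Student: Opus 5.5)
Corollary~\ref{coro:column} will follow from Theorem~\ref{thm:column} by rescaling. Set $\tilde A := \lambda^{-1} A$. Its columns $\tilde A_i = \lambda^{-1}A_i$ are still independent and mean zero, and they satisfy $\|\tilde A_i\|_2 = 1$ almost surely. Since the $\psi_\alpha$ (quasi-)norm is positively homogeneous, $\|\langle \tilde A_i, v\rangle\|_{\psi_\alpha} = \lambda^{-1}\|\langle A_i,v\rangle\|_{\psi_\alpha}$ for every direction $v$. Hence the columns of $\tilde A$ have $\psi_\alpha$-norm bounded by $\tilde K := K/\lambda$.

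Applying Theorem~\ref{thm:column} to $\tilde A$ and using the identity
\[
\|Ax\|_2 - \lambda\|x\|_2 = \lambda\bigl(\|\tilde A x\|_2 - \|x\|_2\bigr),
\]
we obtain
\[
\textsf{E}\sup_{x\in T}\bigl|\|Ax\|_2-\lambda\|x\|_2\bigr| = \lambda\,\textsf{E}\sup_{x\in T}\bigl|\|\tilde Ax\|_2-\|x\|_2\bigr| \le \lambda\, C(\alpha)\frac{K}{\lambda}\bigl(\gamma_\alpha(T)+rad(T)\bigr).
\]
The factors of $\lambda$ cancel, which gives the expectation bound. The tail bound follows in the same way: the event from Theorem~\ref{thm:column} holds with probability at least $1-Ce^{-u^\alpha}$, and on it we multiply through by $\lambda$.

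The only point needing care is checking that the hypotheses of Theorem~\ref{thm:column} are invariant under the scaling in the right way. Independence and the mean-zero property are trivially preserved. For the $\psi_\alpha$ condition, the definition $\|\xi\|_{\psi_\alpha}=\inf\{t>0:\textsf{E}\exp(|\xi/t|^\alpha)\le 2\}$ gives $\|c\xi\|_{\psi_\alpha}=|c|\,\|\xi\|_{\psi_\alpha}$ directly, by the substitution $t\mapsto |c|t$. This holds even when $\alpha<1$, where $\|\cdot\|_{\psi_\alpha}$ is only a quasi-norm.

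The argument relies on the bound in Theorem~\ref{thm:column} being linear in $K$, which is what makes the $\lambda$'s cancel. This is also why the constant in Corollary~\ref{coro:column} does not depend on $\lambda$.
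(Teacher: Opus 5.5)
Your proof is correct and is exactly the rescaling reduction the paper intends when it says Corollary~\ref{coro:column} ``follows immediately'' from Theorem~\ref{thm:column}. As you note, it relies on Theorem~\ref{thm:column} being linear in $K$ at the rescaled parameter $K/\lambda$, which in practice satisfies $K/\lambda\lesssim 1$ because $|\langle\tilde A_i,v\rangle|\le 1$. Keep in mind that the paper's proof of Theorem~\ref{thm:column} (via Lemma~\ref{lem:Ax-1} and Theorem~\ref{thm:alpha_increase}) assumes $K\ge 1$ and absorbs an additive constant into $C(\alpha)K$, so your reduction is sound relative to the theorem as stated, but the paper's argument does not actually verify this linearity for small $K$.
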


\begin{myrem}[On the necessity of column normalization]
Unlike Corollary~\ref{coro:row}, the present result requires the columns of $A$ to satisfy the strong normalization condition
\[
\|A_i\|_2 = \lambda \quad \text{a.s. for all } i.
\]
This assumption cannot be dropped, even in the one-dimensional case $n=1$, nor can it be replaced by isotropy alone. 
For example, let $b \sim \mathrm{Bernoulli}(1/2)$ and $X$ be uniformly distributed on $\sqrt{m}\,\mathbb{S}^{m-1}$, independent of $b$. 
Then $b X$ is isotropic and subgaussian. Moreover, if the columns of $A$ are i.i.d.\ copies of $b X$ and $T=\{e_1\}$, we have
\[
\sup_{x\in T}\bigl|\|Ax\|_2-\lambda\|x\|_2\bigr|
= |b\sqrt{m}-\lambda|
\ge \tfrac{\sqrt{m}}{2}
\]
with probability $1/2$, for any choice of $\lambda$.
Therefore, without column normalization, the two inequalities in Theorem~\ref{coro:column} cannot be established, as they demand control over $\left|
\|Ax\|_2 - \lambda\|x\|_2
\right|$ that does not depend on  dimension $m$.
\end{myrem}

\subsection{Methodological remarks}

In our opinion, the main achievement of this paper is the method of proving the theorem, which differs from the approach in \cite{plan2025random}. The original argument of Plan and Vershynin relies heavily on fine properties of subgaussian random variables, including sharp tail bounds and moment growth that do not extend naturally to distributions with heavier-tailed. As a result, their method cannot be directly adapted beyond the subgaussian setting.

In contrast, our approach avoids the use of tools specific to subgaussian distributions and is based on a more straightforward decomposition method combined with elementary concentration arguments. The idea applies uniformly to all \(\alpha >0\). At the same time, leads to a more straightforward and more transparent proof even in the subgaussian case.

The paper is organized as follows. We introduce notation and preliminaries in Section~2. Sections~3 and~4 develop the main deviation inequalities for the row-wise and column-wise \(\alpha\)-subexponential models, respectively. Section 5 is devoted to applications, including Johnson--Lindenstrauss embeddings, 
restricted isometry properties of $\alpha$-subexponential random matrices, 
and column-normalized structured matrices. In conclusion, we will discuss the directions for future work.

\section{Preliminaries}
In the sequel, $c, C$ with and without indices denote constants. We assume that these constants do not necessarily have to be the same every time they appear.  If a constant depends on a parameter $\alpha$, we write $c = c(\alpha)$.

Throughout this paper, we use the following standard notation.
For a vector $x = (x_1,\dots,x_n) \in \mathbb{R}^n$ and $1 \le p < \infty$, we denote by
\[
\|x\|_p := \left( \sum_{i=1}^n |x_i|^p \right)^{1/p}
\]
the usual $\ell_p$-norm.
For a real-valued random variable $X$ and $1 \le p < \infty$, its $L^p$-norm is defined as
\[
\|X\|_{L^p} := \big( \textsf{E}|X|^p \big)^{1/p}.
\]
For a matrix $A = (a_{ij})$, we write
\[
\|A\|_{\mathrm{HS}} := \left( \sum_{i,j} a_{ij}^2 \right)^{1/2}
\quad \text{and} \quad
\|A\|_{\mathrm{op}} := \sup \{ \|Ax\|_2 : \|x\|_2 = 1 \}
\]
for the Hilbert--Schmidt norm and the operator norm, respectively.

\subsection{$\alpha$-subexponential Random Variables and Isotropic Vectors}
\begin{mydef}[\texorpdfstring{$\alpha$}{alpha}-subexponential random variables]
Let $ \alpha>0$. We say that a random variable $\xi$ is 
\emph{$\alpha$-subexponential} if there exists an absolute constant $c>0$ such that
for all $t \ge 0$,
\[
\textsf{P}\big( |\xi - \textsf{E}\xi| \ge t \big)
\le 2 \exp\!\left( - \frac{t^{\alpha}}{c} \right).
\]

The corresponding $\psi_\alpha$ norm (or quasi-norm when $\alpha<1$) is defined by
\[
\|\xi\|_{\psi_\alpha}
:= \inf \left\{ t>0 : \textsf{E}\exp\!\left( \left|\frac{\xi}{t}\right|^{\alpha} \right) \le 2 \right\}.
\]
\end{mydef}

We next collect several fundamental properties of $\alpha$-subexponential random variables.
These results are standard and well known, so we omit the proofs.
For a detailed exposition, we refer the reader to Proposition A.1 in \cite{jeong2022sub}.

\begin{myprop}
    \label{Lem_2.1}
	Let $\eta, \xi$ be random variables and  $\alpha >0$.
\begin{enumerate}[(a)]
    \item If $|\eta|\le |\xi|$, then $\|\eta\|_{\psi_\alpha}\le \|\xi\|_{\psi_\alpha}$ .
    
    \item If $\|\eta\|_{\psi_\alpha} \leq K < \infty$, then $\textsf{P}(|\eta| \geq t) \leq 2\exp(-t^\alpha/K^\alpha)$ for all $t \geq 0$.
    
    \item If $\textsf{P}(|\eta| \geq t) \leq 2\exp(-t^\alpha/K^\alpha)$ for all $t \geq 0$ and some $K > 0$, then $\|\eta\|_{\psi_\alpha} \leq C(\alpha)K$.
    
    \item $\|\eta^p\|_{\psi_\alpha} = \|\eta\|_{\psi_{p\alpha}}^p$ for all $p \geq 1$. In particular, $\|\eta^2\|_{\psi_1} = \|\eta\|_{\psi_2}^2$.
    
    \item $\|\eta\xi\|_{\psi_\alpha} \leq \|\eta\|_{\psi_{p\alpha}} \|\xi\|_{\psi_{q\alpha}}$ for $p,q \in (1,\infty)$ such that $\frac{1}{p} + \frac{1}{q} = 1$. In particular, $\|\eta\xi\|_{\psi_1} \leq \|\eta\|_{\psi_2} \|\xi\|_{\psi_2}$.
    
    \item $\textsf{E}|\eta|^p \leq \left(C p^{\frac{1}{\alpha}} \|\eta\|_{\psi_\alpha}\right)^p$ for all $p \geq 1$ and some absolute constant $C \leq 4$.
    
    \item $\|\eta - \textsf{E}\eta\|_{\psi_\alpha} \leq C(\alpha)\|\eta\|_{\psi_\alpha}$.
    
    \item $\|\eta\|_{\psi_\alpha} \leq C\|\eta\|_{\psi_\beta}$ for all $\beta \geq \alpha$ and some absolute constant $C$.

    \item $\|\eta + \xi\|_{\psi_\alpha}\le C(\alpha)\left(\|\eta\|_{\psi_\alpha}+\|\xi\|_{\psi_\alpha}\right).$
\end{enumerate}

\end{myprop}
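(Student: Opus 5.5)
The plan is to reduce every item to two ingredients: the tail characterisation of the $\psi_\alpha$ quasi-norm, and the elementary identity $\textsf{E}\exp(|\eta|^\alpha/t^\alpha)=1+\int_0^\infty \alpha s^{\alpha-1}t^{-\alpha}e^{s^\alpha/t^\alpha}\textsf{P}(|\eta|\ge s)\,ds$.

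\textbf{Items (a), (d), (e).} These are direct from the definition.

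For (a), monotonicity of $x\mapsto e^{x^\alpha}$ gives $\textsf{E}\exp(|\eta/t|^\alpha)\le \textsf{E}\exp(|\xi/t|^\alpha)$, so every admissible $t$ for $\xi$ is admissible for $\eta$.

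For (d), $\textsf{E}\exp(|\eta^p/t^p|^{\alpha})=\textsf{E}\exp(|\eta/t|^{p\alpha})$. Substituting $s=t^p$ shows the two infima correspond.

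For (e), normalise $\|\eta\|_{\psi_{p\alpha}}=\|\xi\|_{\psi_{q\alpha}}=1$. Young's inequality gives $|\eta\xi|^\alpha\le |\eta|^{p\alpha}/p+|\xi|^{q\alpha}/q$. Then convexity of $\exp$ yields $\textsf{E}e^{|\eta\xi|^\alpha}\le \frac1p\textsf{E}e^{|\eta|^{p\alpha}}+\frac1q\textsf{E}e^{|\xi|^{q\alpha}}\le 2$.

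\textbf{Items (b), (c), (f).} For (b), Markov's inequality applied to $e^{|\eta/K|^\alpha}$ gives $\textsf{P}(|\eta|\ge t)\le 2e^{-t^\alpha/K^\alpha}$.

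For (c), put the tail bound into the integral identity with $t=\lambda K$. This gives $1+2\lambda^{-\alpha}\int_0^\infty \alpha u^{\alpha-1}e^{-u^\alpha(1-\lambda^{-\alpha})}du = 1+\frac{2}{\lambda^\alpha-1}$. This is at most $2$ once $\lambda^\alpha\ge 3$, so $C(\alpha)=3^{1/\alpha}$ works.

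For (f), write $\textsf{E}|\eta|^p=\int_0^\infty p s^{p-1}\textsf{P}(|\eta|\ge s)ds$ and use (b). This gives $2K^p\Gamma(p/\alpha+1)$. Stirling then bounds this by $(Cp^{1/\alpha}K)^p$. The constant $C\le 4$ is the delicate part: for small $\alpha$ the factor $\Gamma(p/\alpha+1)^{1/p}\sim (p/(e\alpha))^{1/\alpha}$, so strictly the constant depends on $\alpha$. I would therefore state and prove it with $C=C(\alpha)$, noting that the uniform bound holds for $\alpha\ge1$.

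\textbf{Items (h), (i), (g).} For (h), use (b) to get $\textsf{P}(|\eta|\ge t)\le 2e^{-(t/K)^\beta}$ with $K=\|\eta\|_{\psi_\beta}$. For $t\ge K$ this is at most $2e^{-(t/K)^\alpha}$. For $t<K$ the same bound is trivially $\ge 2e^{-1}$, and since a probability is at most $1$, it suffices to compare against $2e^{-(t/(CK))^\alpha}$, which exceeds $1$ for $t<K$ once $C$ is large. Then apply (c). The constant from (c) is $3^{1/\alpha}$, which is not absolute. To get an absolute constant one instead uses Jensen: $x\mapsto x^{\beta/\alpha}$ is convex, so $\textsf{E}e^{|\eta/K|^\alpha}\le (\textsf{E}e^{|\eta/K|^\beta})^{\alpha/\beta}\le 2$ on the event structure. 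Concretely, I would apply Hölder, $\textsf{E}e^{Y}\le (\textsf{E}e^{Y^{\beta/\alpha}}e^{1})^{\cdots}$; otherwise I would settle for $C(\alpha)$.

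For (i), use $|\eta+\xi|^\alpha\le c_\alpha(|\eta|^\alpha+|\xi|^\alpha)$ with $c_\alpha=\max(1,2^{\alpha-1})$. A union bound on tails, $\textsf{P}(|\eta+\xi|\ge t)\le \textsf{P}(|\eta|\ge t/2)+\textsf{P}(|\xi|\ge t/2)$, gives $\le 4e^{-(t/(2M))^\alpha}$ with $M=\max$ of the two quasi-norms. Absorbing the factor $4$ into the constant via (c)'s computation gives $C(\alpha)(\|\eta\|+\|\xi\|)$.

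For (g), write $\eta-\textsf{E}\eta$ and apply (i). Jensen gives $|\textsf{E}\eta|\le \textsf{E}|\eta|\le C(\alpha)\|\eta\|_{\psi_\alpha}$ by (f) with $p=1$. The $\psi_\alpha$ quasi-norm of a constant $c$ is $|c|/(\ln 2)^{1/\alpha}$, which finishes the item.

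\textbf{Main obstacle.} Each step is routine. The only real issue is bookkeeping of constants, specifically the claims that the constants in (f) and (h) are absolute, which I would verify or weaken to $C(\alpha)$.
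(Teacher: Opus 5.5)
The paper gives no proof of its own for this proposition: it calls the facts standard and defers to Proposition~A.1 of \cite{jeong2022sub}. Your self-contained route (tail characterisation plus the layer-cake formula for $\textsf{E}\exp(|\eta|^\alpha/t^\alpha)$) is therefore a different treatment. It is correct for (a)--(e), (g) and (i). The value $1+2/(\lambda^\alpha-1)$ in (c) is right. So is Young's inequality plus convexity of $\exp$ in (e). The union bound in (i) works, with the factor $4$ absorbed by rerunning (c) with $\lambda^\alpha\ge 5$. In (g), $\|c\|_{\psi_\alpha}=|c|(\log 2)^{-1/\alpha}$ combined with (f) at $p=1$ also checks out. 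What your version adds over the citation is explicit $\alpha$-dependence of every constant. That is exactly what matters in the quasi-norm range $\alpha<1$, which the paper's main theorems cover.

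Your hesitation about (f) and (h) is more than bookkeeping. With absolute constants, both items are false for small $\alpha$, so the $C(\alpha)$ versions you fall back on are the correct statements. This is harmless downstream, since every main result carries a constant $C(\alpha)$ anyway.

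For (f), take $\textsf{P}(\eta\ge t)=e^{-t^\alpha}$ for $t\ge 0$. Then $\|\eta\|_{\psi_\alpha}=2^{1/\alpha}$ and $\textsf{E}\eta=\Gamma(1+1/\alpha)$. So already $p=1$ forces $C\ge \Gamma(1+1/\alpha)\,2^{-1/\alpha}$, which is about $3.5\cdot 10^{3}$ at $\alpha=1/10$. Your own estimate $2K^p\Gamma(p/\alpha+1)\le\bigl(2(1+1/\alpha)^{1/\alpha}p^{1/\alpha}K\bigr)^p$ does give $C\le 4$ whenever $\alpha\ge 1$, confirming your remark.

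For (h), $\eta\equiv 1$ gives $\|\eta\|_{\psi_\alpha}/\|\eta\|_{\psi_\beta}=(\log 2)^{1/\beta-1/\alpha}$, which is unbounded as $\alpha\to 0$.

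In particular, you should discard the Jensen sketch in (h). The inequality $\textsf{E}e^{|\eta/K|^\alpha}\le(\textsf{E}e^{|\eta/K|^\beta})^{\alpha/\beta}$ fails already for $\eta\equiv 1$: the left side is $\exp((\log 2)^{\alpha/\beta})$ and the right side is $2^{\alpha/\beta}$. Its conclusion would also give constant $1$, which the example above rules out. The slip is that Jensen for $x\mapsto x^{\beta/\alpha}$ applied to $e^{|\eta/K|^\alpha}$ produces $e^{(\beta/\alpha)|\eta/K|^\alpha}$, not $e^{|\eta/K|^\beta}$.

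A correct Jensen argument does exist. With $r=\alpha/\beta$, the map $w\mapsto\exp\bigl((\log 2)^{1-r}(\log w)^r\bigr)$ is increasing and concave on $[1,\infty)$. Applying it to $w=e^{|\eta/K|^\beta}$ gives the sharp bound $\|\eta\|_{\psi_\alpha}\le(\log 2)^{1/\beta-1/\alpha}\|\eta\|_{\psi_\beta}$, which the constant example shows cannot be improved. This constant is not absolute either, and your tail argument with $C(\alpha)=(3/\log 2)^{1/\alpha}$ already suffices.
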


Given random variables $\xi_1,\dots,\xi_n$, we write
\[
\xi := (\xi_1,\dots,\xi_n)
\]
for the associated random vector in $\mathbb{R}^n$.
The corresponding $\psi_\alpha$ norm (or quasi-norm when $\alpha<1$) of $\xi$ is defined by
\[
\|\xi\|_{\psi_\alpha}
:= \sup_{x \in S^{n-1}} \|\langle \xi, x \rangle\|_{\psi_\alpha}.
\]

We say that a random vector $X$ in $\mathbb{R}^n$ is \emph{isotropic} if
\[
\textsf{E}\, X X^\top = I_n,
\]
where $I_n$ denotes the identity matrix in $\mathbb{R}^n$.

\subsection{Generic chaining and Talagrand's $\gamma$-functionals}

We begin by recalling the notion of an admissible sequence, which plays a central role
in the definition of Talagrand's $\gamma$-functionals.

\begin{mydef}[Admissible sequence]\label{def2.3}
Let $(T,d)$ be a metric space. An admissible sequence of partitions of $T$ is an increasing sequence $\left(\mathcal{A}_n\right)_{n\geq 0}$ of partitions of $T$ such that  
\[
|\mathcal{A}_0| = 1
\qquad\text{and}\qquad
|\mathcal{A}_k| \le 2^{2^k}, \quad k \ge 1.
\]
\end{mydef}

Using admissible sequences, we define Talagrand's $\gamma_2$-functional.
For a metric space $(T,d)$, set
\[
\gamma_2(T,d)
:= \inf \sup_{t \in T} \sum_{n \ge 0} 2^{n/2} \, \Delta_d(A_n(t)),
\]
where the infimum is taken over all admissible sequences $(\mathcal{A}_k)_{k \ge 0}$. Here $A_n\left(t\right)$ denotes the unique set $A\in \mathcal{A}_n$ such that $t\in A$
and 
\[
\Delta_d(A) := \sup_{s,t \in A} d(t,s).
\]

More generally, for $\alpha >0$, the $\gamma_\alpha$-functional of $(T,d)$
is defined by
\[
\gamma_\alpha(T,d)
:= \inf \sup_{t \in T} \sum_{n \ge 0} 2^{n/\alpha} \, \Delta_d(A_n(t)),
\]
where the infimum is again taken over all admissible sequences.
When $d$ coincides with the Euclidean metric $d_2$, we write $\gamma_\alpha(T)$ for simplicity.

We employ the chaining method to describe the concentration behavior of random processes with $\alpha$-subexponential increments. 
To this end, we first recall a fundamental result due to Talagrand.

\begin{mytheo}[See Talagrand {\cite[Theorem 2.7.14]{Talagrand2014}}]\label{thm:talagrand 2.7.14}
Let $(S_t)_{t \in T}$ be a centered stochastic process, not necessarily symmetric. 
For $n \ge 1$, define the increment metric
\[
\delta_n(s,t) := \|S_s - S_t\|_{2^n}.
\]
For a subset $A \subset T$, let $\Delta_n(A)$ denote its diameter with respect to $\delta_n$.
Let $(\mathcal{A}_n)_{n \ge 0}$ be an admissible sequence of partitions of $T$.
Then there exists a universal constant $C>0$ such that
\begin{equation}\label{eq:thm2.7.14-mean}
\textsf{E} \sup_{s,t \in T} |S_s - S_t|
\le C \sup_{t \in T} \sum_{n \ge 0} \Delta_n(A_n(t)).
\end{equation}

Moreover, for any $u>0$, let $k$ be the largest integer such that $2^k \le u^2$. Then
\begin{equation}\label{eq:thm2.7.14-tail}
\textsf{P}\Bigg(
\sup_{s,t \in T} |S_s - S_t|
\ge c \Delta_k(T) + \sup_{t \in T} \sum_{n \ge 0} \Delta_n(A_n(t))
\Bigg)
\le C \exp(-u^2),
\end{equation}
where $c,C>0$ are universal constants.
\end{mytheo}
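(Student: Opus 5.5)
The plan is the standard generic chaining argument, with Markov's inequality at moment order $2^{n}$ (or $2^{n-1}$) replacing subgaussian tail bounds. First I reduce to the case of finite $T$: the supremum over an arbitrary $T$ is the increasing limit of suprema over finite subsets, and the right-hand sides only decrease when $T$ and the partitions are restricted. So monotone convergence handles the general case.

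\emph{Setup.} For each $n$ and each $A\in\mathcal{A}_n$, fix a point of $A$, and let $\pi_n(t)$ be the chosen point of $A_n(t)$. Since $|\mathcal{A}_0|=1$, $\pi_0(t)=t_0$ for every $t$. For $n$ large, $\pi_n(t)=t$ because $T$ is finite, so
\[
S_t-S_{t_0}=\sum_{n\ge1}\bigl(S_{\pi_n(t)}-S_{\pi_{n-1}(t)}\bigr).
\]
Both $\pi_n(t)$ and $\pi_{n-1}(t)$ lie in $A_{n-1}(t)$, because the partitions are nested. Hence
\[
\|S_{\pi_n(t)}-S_{\pi_{n-1}(t)}\|_{2^{n-1}}\le \Delta_{n-1}(A_{n-1}(t)).
\]
I deliberately use the moment of order $2^{n-1}$ rather than $2^n$. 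This is because $\delta_n$ increases with $n$, and I want the final bound expressed through $\Delta_{n-1}(A_{n-1}(t))$, which after reindexing gives $\sum_{n\ge0}\Delta_n(A_n(t))$.

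\emph{Union bound for the mean.} The number of distinct pairs $(\pi_n(t),\pi_{n-1}(t))$ is at most $2^{2^n}\cdot 2^{2^{n-1}}\le 2^{2^{n+1}}$. Markov's inequality with exponent $2^{n-1}$ gives
\[
\textsf{P}\bigl(|S_{\pi_n(t)}-S_{\pi_{n-1}(t)}|\ge v\,\Delta_{n-1}(A_{n-1}(t))\bigr)\le v^{-2^{n-1}}.
\]
A union bound over pairs and levels then shows that, outside an event of probability at most $\sum_{n\ge1}(16/v)^{2^{n-1}}$, every $t$ satisfies $|S_t-S_{t_0}|\le v\sum_{n\ge0}\Delta_n(A_n(t))$. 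For $v\ge 32$ this probability is at most $C(32/v)$ and decays fast in $v$. Integrating the tail in $v$ gives \eqref{eq:thm2.7.14-mean}, after using $|S_s-S_t|\le|S_s-S_{t_0}|+|S_t-S_{t_0}|$.

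\emph{Tail bound.} Fix $u$ and $k$ with $2^k\le u^2<2^{k+1}$, and split the chain at level $k$.
\begin{itemize}
\item \emph{Coarse part.} There are at most $2^{2^k}$ points $\pi_k(t)$. For each of them, Markov's inequality at order $2^k$ gives
\[
\textsf{P}\bigl(|S_{\pi_k(t)}-S_{t_0}|\ge e^2\Delta_k(T)\bigr)\le e^{-2\cdot2^k}.
\]
The union bound costs a factor $2^{2^k}=e^{2^k\ln2}$, so this part fails with probability at most $e^{-(2-\ln2)2^k}\le e^{-cu^2}$.
\item \emph{Fine part ($n>k$).} I repeat the previous argument with a fixed numerical threshold $v_0$, for instance $v_0=16e$. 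Level $n$ then contributes failure probability at most $e^{-2^{n-1}}$. The sum over $n>k$ is $\le Ce^{-2^k}\le Ce^{-u^2/2}$, and on the complementary event the fine part is bounded by $v_0\sup_t\sum_n\Delta_n(A_n(t))$.
\end{itemize}
Combining the two parts bounds $\sup_{s,t}|S_s-S_t|$ by $2e^2\Delta_k(T)+2v_0\sup_t\sum_n\Delta_n(A_n(t))$, outside probability $Ce^{-cu^2}$.

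\emph{Main obstacle: constants.} The statement has $\exp(-u^2)$ and coefficient $1$ in front of the chaining sum. The exponent constant is fixed by running the argument with $u$ replaced by $C'u$: the resulting change from $k$ to a level $k'$ with $k'\le k+O(1)$ only costs $\Delta_{k'}(T)\le\Delta_{k+O(1)}(T)$, which is still controlled using monotonicity in $p$ and the chaining sum. The numerical factor in front of the sum must be absorbed into the universal constants. That is, it should be read, as in Talagrand's formulation, as $L\sup_t\sum_n\Delta_n(A_n(t))$ with a universal $L$. I expect this bookkeeping to be the only delicate point; the probabilistic content is just Markov plus the union bound at each level.
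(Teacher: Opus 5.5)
The paper gives no proof of this statement; it is quoted from Talagrand. Your outline (chaining along $\pi_n(t)$, Markov's inequality at order $2^{n-1}$ for the link at level $n$, and a union bound) is the standard argument. As written, however, the mean bound \eqref{eq:thm2.7.14-mean} does not follow. For $n=1$ the exponent is $2^0=1$, so the link $S_{\pi_1(t)}-S_{t_0}$ is controlled only in $L^1$, and the first term of your union bound is $16/v$. The failure probability is therefore of order $1/v$; it does not ``decay fast''. Since $\int_{32}^\infty dv/v=\infty$, integrating it bounds nothing.

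The fix is to treat the first link in expectation rather than through its tail. Since $\pi_1(t)$ takes at most $|\mathcal{A}_1|\le 4$ values,
\[
\textsf{E}\sup_{t\in T}|S_{\pi_1(t)}-S_{t_0}|\le\sum_{A\in\mathcal{A}_1}\textsf{E}|S_{\pi_1(A)}-S_{t_0}|\le 4\Delta_0(T),
\]
where $\pi_1(A)$ is the point chosen in $A$. Moreover, $\Delta_0(T)=\Delta_0(A_0(t))$ is a term of the chaining sum for every $t$. The union bound is then needed only for $n\ge2$, where the exponent is at least $2$ and $\sum_{n\ge2}(16/v)^{2^{n-1}}\le 2(16/v)^2$ for $v\ge32$, which is integrable.

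Your remedy for the exponent in the tail bound is also incorrect. Passing from $k$ to $k'=k+O(1)$ would require $\Delta_{k'}(T)\lesssim\Delta_k(T)+\sup_t\sum_n\Delta_n(A_n(t))$. Monotonicity of $L^p$ norms gives the opposite inequality, and the comparison fails in general. For example, take $T=\{s,t\}$, let $\mathcal{A}_1$ be the singletons, and set $S_s=0$ and $S_t=\pm M$ with probability $\varepsilon/2$ each (else $0$). Then $\Delta_{k+1}(T)/(\Delta_k(T)+\Delta_0(T))$ is of order $\varepsilon^{-2^{-k-1}}\to\infty$.

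No change of level is needed, though. Markov at order $p$ with threshold $c\|Y\|_p$ costs $c^{-p}$, so the exponent can be adjusted through the thresholds alone:
\begin{itemize}
\item With threshold $2e^2\Delta_k(T)$ in the coarse part, the union bound gives $2^{2^k}(2e^2)^{-2^k}=e^{-2^{k+1}}\le e^{-u^2}$.
\item With $v_0=16e^2$, level $n$ of the fine part fails with probability at most $e^{-2^n}$, so the fine part fails with probability at most $2e^{-2^{k+1}}\le 2e^{-u^2}$.
\end{itemize}
Both estimates use $2^{k+1}>u^2$.

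Your reading of the chaining sum with a universal factor in front is the right one. With coefficient exactly $1$ the tail bound is false, as many independent sparse increments at a single level show. A universal factor is also all Theorem~\ref{maj} uses.

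Finally, ``$\pi_n(t)=t$ for large $n$'' needs a harmless normalization: for finite $T$, take $\mathcal{A}_n$ to consist of singletons once $2^{2^n}\ge|T|$. This only lowers the right-hand sides.
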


As a direct consequence, the above theorem yields a sharp bound for processes with $\alpha$-subexponential increments.

\begin{mytheo}\label{maj}
Let $(Z_x)_{x \in T}$ be a stochastic process indexed by a bounded set $T \subset \mathbb{R}^n$. 
Assume that the process has the uniform $\alpha$-subexponential increments, i.e., there exists $M \ge 0$ such that
\[
\|Z_x - Z_y\|_{\psi_\alpha} \le M \|x-y\|_2
\quad \text{for all } x,y \in T.
\]

Then we have
\begin{equation}\label{eq:thm2.7.14-mean-alpha}
     \textsf{E} \sup_{x,y \in T} |Z_x - Z_y|\leq   C(\alpha)M    \gamma_\alpha(T)
\end{equation}
Moreover, given $ u > 0$, we have

\begin{equation}\label{eq:thm2.7.14-tail-alpha}
    \textsf{P}\left( \sup_{s,t \in T} |Z_s - Z_t| \geq cM( u\cdot rad(T) + \gamma_\alpha(T)) \right) \leq  C\exp(-u^\alpha).
\end{equation}
\end{mytheo}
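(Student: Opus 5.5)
The plan is to deduce Theorem~\ref{maj} from Theorem~\ref{thm:talagrand 2.7.14} applied to the process $S_x := Z_x - Z_{x_0}$ for a fixed $x_0\in T$. Subtracting $Z_{x_0}$ does not change any increment $S_s-S_t=Z_s-Z_t$. If $\textsf{E}Z_x$ is not constant we would work with the centred increments and absorb the means using Proposition~\ref{Lem_2.1}(g); in the applications the process is centred, so we assume this. The whole argument then reduces to comparing the moment metrics $\delta_n(s,t)=\|Z_s-Z_t\|_{L^{2^n}}$ with the Euclidean metric. By Proposition~\ref{Lem_2.1}(f) with $p=2^n$,
\[
\delta_n(s,t) \le C\,(2^n)^{1/\alpha}\,\|Z_s-Z_t\|_{\psi_\alpha} \le C\,2^{n/\alpha}M\|s-t\|_2 .
\]
Hence $\Delta_n(A)\le C M 2^{n/\alpha}\Delta_{d_2}(A)$ for every $A\subset T$. (For $n=0$ we use $p=1$, which is covered by (f).)

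For the expectation bound, take an admissible sequence $(\mathcal A_n)$ that nearly attains the infimum in $\gamma_\alpha(T)$, say within a factor $2$. Plugging the comparison above into \eqref{eq:thm2.7.14-mean} gives
\[
\textsf{E}\sup_{s,t\in T}|Z_s-Z_t|\le C\sup_{t\in T}\sum_{n\ge0}\Delta_n(A_n(t))\le C(\alpha)M\sup_{t\in T}\sum_{n\ge0}2^{n/\alpha}\Delta_{d_2}(A_n(t))\le 2C(\alpha)M\gamma_\alpha(T),
\]
which is \eqref{eq:thm2.7.14-mean-alpha}.

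The tail bound is the main obstacle, because \eqref{eq:thm2.7.14-tail} is calibrated to $e^{-u^2}$ while we need $e^{-u^\alpha}$. I would apply \eqref{eq:thm2.7.14-tail} with the parameter $v:=u^{\alpha/2}$ in place of $u$, so the failure probability becomes $C\exp(-v^2)=C\exp(-u^\alpha)$. The index $k$ is then the largest integer with $2^k\le v^2=u^\alpha$. The extra term is controlled using the comparison above together with $\Delta_{d_2}(T)\le 2\,rad(T)$:
\[
\Delta_k(T)\le CM2^{k/\alpha}\Delta_{d_2}(T)\le CM(u^\alpha)^{1/\alpha}\cdot 2\,rad(T)=2CM\,u\,rad(T).
\]
The chaining sum is bounded by $C(\alpha)M\gamma_\alpha(T)$ exactly as in the expectation step. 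Together these give \eqref{eq:thm2.7.14-tail-alpha}, with the threshold $cM(u\,rad(T)+\gamma_\alpha(T))$, where $c$ may depend on $\alpha$.

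The small-$u$ regime needs separate handling. If $u^\alpha<1$, there is no integer $k\ge0$ with $2^k\le u^\alpha$, but then $C\exp(-u^\alpha)\ge Ce^{-1}\ge1$ once $C\ge e$, so the bound holds trivially. For $u^\alpha\ge1$ we have $k\ge0$ and the argument above applies.
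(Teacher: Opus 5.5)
Your proposal is correct and follows essentially the same route as the paper: bound $\delta_n$ by $C\,2^{n/\alpha}M\|\cdot\|_2$ via Proposition~\ref{Lem_2.1}(f), feed this into Theorem~\ref{thm:talagrand 2.7.14}, and reparameterize $u$ (your $v=u^{\alpha/2}$ is exactly the paper's ``suitable reparameterization''). Your consistent use of the diameters $\Delta_{d_2}(A_n(t))$ and your explicit treatment of $u^\alpha<1$ make it, if anything, slightly more careful; the paper's proof writes $\operatorname{rad}(A_n(t))$, which does not match the definition of $\gamma_\alpha$.

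The one inaccuracy is your aside that the processes in the applications are centred. For example, $\textsf{E}\bigl(\|BAx\|_2-\|B\|_{HS}\|x\|_2\bigr)\le 0$ by Jensen, typically strictly. This is harmless, though, since only increments enter, and $|\textsf{E}Z_x-\textsf{E}Z_y|\le \textsf{E}|Z_x-Z_y|\le C(\alpha)M\Delta_{d_2}(T)\le C(\alpha)M\gamma_\alpha(T)$ is absorbed just as you suggest.
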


\begin{proof}
\emph{Step 1: Comparing $\delta_n$ with the Euclidean distance.}
By Proposition \ref{Lem_2.1} (f), for every random variable $\eta$ with finite
$\psi_\alpha$-norm and every $p\ge1$ we have
\[
  \big(\textsf{E}|\eta|^p\big)^{1/p}
  \le C\ p^{1/\alpha}\,\|\eta\|_{\psi_\alpha}.
\]
Applying this with
$\eta = Z_x - Z_y$ and $p=2^n$, we have
\[
  \|Z_x - Z_y\|_{2^n}
  = \big(\textsf{E}|Z_x - Z_y|^{2^n}\big)^{1/2^n}
  \le C\,2^{n/\alpha}\,\|Z_x - Z_y\|_{\psi_\alpha}
  \le C\,M\,2^{n/\alpha}\,\|x-y\|_2
\]
for all $x,y\in T$ and all $n\ge1$. 

Consequently, for every subset $A\subset T$,
\begin{equation}\label{eq:Delta-d2}
  \Delta_n(A)
  = \sup_{x,y\in A}\delta_n(x,y)
  \le CM\,2^{n/\alpha}\,rad(A),
  \qquad n\ge1.
\end{equation}

\medskip
\noindent
\emph{Step 2: Application of the generic chaining bound.}
Let $(\mathcal{A}_n)_{n\ge0}$ be an arbitrary admissible sequence of partitions of $T$
in the sense of Definition \ref{def2.3}. For each $t\in T$ we denote by $A_n(t)$
the unique element of $A_n$ containing $t$. Theorem \ref{thm:talagrand 2.7.14} applied to the centered
process $S_t := Z_t$ and the distances $\delta_n$ defined above gives the bound
\begin{equation}\label{eq:generic}
  \textsf{E}\sup_{x,y\in T}|Z_x - Z_y|
  \le C \sup_{t\in T}\sum_{n\ge0}\Delta_n\big(A_n(t)\big).
\end{equation}

Using \eqref{eq:Delta-d2}, we further obtain
\begin{equation}\label{eq:Delta<Gamma_T}
  \sup_{t\in T}\sum_{n\ge0}\Delta_n\big(A_n(t)\big)
  \le CM \sup_{t\in T}\sum_{n\ge0}2^{n/\alpha}\,
        rad\big(A_n(t)\big).
\end{equation}
Taking the infimum over all admissible sequences $(A_n)_{n\ge0}$ and recalling
the definition
\[
  \gamma_\alpha(T)
  := \inf\sup_{t\in T}\sum_{n\ge0}2^{n/\alpha}\,
     rad\big(A_n(t)\big),
\]
we conclude that
\[
  \textsf{E}\sup_{x,y\in T}|Z_x - Z_y|
  \le C(\alpha)\,M\,\gamma_\alpha(T).
\]   

Furthermore, observe that if $2^k \le u^2$, then
\[
\Delta_k(T) \le CM 2^{k/\alpha} \operatorname{rad}(T)
\le CM u^{2/\alpha} \operatorname{rad}(T).
\]
Combining this estimate with \eqref{eq:thm2.7.14-tail} and \eqref{eq:Delta<Gamma_T}, we obtain that for any $u>0$ and any admissible sequence $(\mathcal{A}_n)_{n\ge0}$ on $T$,
\[
\textsf{P}\Bigg(
\sup_{s,t \in T} |Z_s - Z_t|
\ge cM\Big(
u^{2/\alpha}\operatorname{rad}(T)
+ \sup_{t\in T}\sum_{n\ge0}2^{n/\alpha}\,
\operatorname{rad}\big(A_n(t)\big)
\Big)
\Bigg)
\le C \exp(-u^2).
\]
By a suitable reparameterization of $u$ and taking the infimum over all admissible sequences, we arrive at
\eqref{eq:thm2.7.14-tail-alpha}.

\end{proof}

\section{Proof of Theorem~\ref{thm:row}}
In this section, we give the proof of Theorem \ref{thm:row}.

We need the following results which are from Sambale \cite{sambale2023some}.
\begin{mytheo}\label{hason}{\cite[Theorem 1.1]{sambale2023some}}
For any \(\alpha \in (0,2]\), let \(X_1, \ldots, X_n\) be independent, centered random variables such that \(\|X_i\|_{\psi_\alpha} \leq K\) for any \(i\), and \(A = (a_{ij})\) be a symmetric matrix. Then, for any \(t \geq 0\),

\[
\textsf{P} \left( \left| X^TAX - \textsf{E}X^TAX \right| \geq t \right) \leq 2 \exp \left( -\frac{1}{C(\alpha)} \min \left( \frac{t^2}{K^4 \|A\|_{\text{HS}}^2}, \left( \frac{t}{K^2 \|A\|_{\text{op}}} \right)^{\frac{\alpha}{2}} \right) \right).
\]
\end{mytheo}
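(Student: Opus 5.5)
Since the statement is quoted from Sambale, I would not reprove it from scratch inside the paper. If a self-contained argument is wanted, the plan is to prove the equivalent moment bound
\[
\big\|X^TAX-\textsf{E}X^TAX\big\|_{L^p}\le C(\alpha)K^2\big(\sqrt{p}\,\|A\|_{HS}+p^{2/\alpha}\|A\|_{op}\big),\qquad p\ge 2 .
\]
By Markov's inequality with an optimized choice of $p$, this gives the stated tail $\min\big(t^2/(K^4\|A\|_{HS}^2),(t/(K^2\|A\|_{op}))^{\alpha/2}\big)$. The quadratic form splits into a diagonal part $\sum_i a_{ii}(X_i^2-\textsf{E}X_i^2)$ and an off-diagonal part $\sum_{i\ne j}a_{ij}X_iX_j$, and I would bound each separately.

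\emph{Diagonal part.} By Proposition~\ref{Lem_2.1}(d),(g), the variables $X_i^2-\textsf{E}X_i^2$ are independent, centered, and have $\psi_{\alpha/2}$-norm at most $C(\alpha)K^2$. Here $\alpha/2\le1$, so the moment inequality for sums of independent Weibull-type variables (Gluskin--Kwapie\'n / Lata{\l}a type, for $\beta=\alpha/2\le 1$) applies:
\[
\Big\|\sum_i b_iY_i\Big\|_{L^p}\lesssim_\beta \sqrt p\,\|b\|_2+p^{1/\beta}\|b\|_\infty .
\]
With $b_i=a_{ii}$ we use $\|b\|_2\le\|A\|_{HS}$ and $\|b\|_\infty\le\|A\|_{op}$. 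This gives exactly the target bound with $p^{2/\alpha}$.

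\emph{Off-diagonal part.} By de la Pe\~na--Montgomery-Smith decoupling, the $L^p$ norm is comparable to that of $X^TAY$, where $Y$ is an independent copy and the diagonal of $A$ is removed. Conditionally on $Y$, this is a linear form $\sum_iX_i(AY)_i$. For linear forms in $\psi_\alpha$ variables the same moment inequalities give
\[
\sqrt p\,K\|AY\|_2+p^{1/\alpha}K\|AY\|_{\ast},
\]
where $\|\cdot\|_\ast=\|\cdot\|_\infty$ if $\alpha\le1$ and $\|\cdot\|_{\alpha^\ast}$ if $1<\alpha\le2$. I would then take $L^p$ norms in $Y$.

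For the first term, $\|AY\|_2^2=Y^TA^TAY$. Its mean is at most $K^2\|A\|_{HS}^2$, and its fluctuation is controlled by applying the diagonal bound and a bootstrap (an induction on $p$, or a fixed-point argument) to the quadratic form in $A^TA$, using $\|A^TA\|_{HS}\le\|A\|_{op}\|A\|_{HS}$. This should yield $\|\,\|AY\|_2\|_{L^p}\lesssim K(\|A\|_{HS}+p^{1/\alpha}\|A\|_{op})$. The first term then contributes $K^2(\sqrt p\|A\|_{HS}+p^{1/2+1/\alpha}\|A\|_{op})$, and $p^{1/2+1/\alpha}\le p^{2/\alpha}$ because $\alpha\le2$.

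For the second term I would write $\|AY\|_\ast=\sup_{b}\langle A^Tb,Y\rangle$, with the supremum over the dual unit ball. I would then bound its $L^p$ norm by $K\big(\textsf{E}\|AY\|_\ast+p^{1/\alpha}\|A\|_{op}\big)$, using a concentration or chaining bound such as Theorem~\ref{maj} applied to the process $b\mapsto\langle A^Tb,Y\rangle$.

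\emph{Main obstacle.} The hard step is the second term: showing that $p^{1/\alpha}\,\textsf{E}\|AY\|_\ast$ does not produce a forbidden cross term $p^{1/\alpha}\|A\|_{HS}$. For $\alpha\le1$ this looks fine, since $\|AY\|_\infty$ involves only individual rows and one can use $\textsf{E}\max_i|(AY)_i|$ together with the moment bounds. For $1<\alpha\le 2$, bounding $\|AY\|_{\alpha^\ast}$ by $\|AY\|_2$ is too lossy. There I expect to need the genuinely two-level argument (Lata{\l}a's moment estimates for chaos, as in Sambale's proof), or else to interpolate between the cases $\alpha=1$ and $\alpha=2$ (the classical Hanson--Wright inequality).
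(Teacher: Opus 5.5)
The paper contains no proof of this statement. It is quoted verbatim from Sambale's Theorem~1.1, so your primary decision to cite it rather than reprove it is exactly what the paper does.

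Your optional self-contained outline, however, is not a proof as it stands. The diagonal estimate and the $\sqrt p\,\|AY\|_2$ term are sound (the latter as part of an induction over dyadic $p$). The step you flag is left open, though, and the tool you propose for it fails.

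Theorem~\ref{maj}, applied to $x\mapsto\langle x,Y\rangle$ on $T=A^TB_\ast$ (with $B_\ast$ the dual unit ball), yields
$\bigl\|\,\|AY\|_\ast\bigr\|_{L^p}\lesssim K\bigl(\gamma_\alpha(A^TB_\ast)+p^{1/\alpha}\|A\|_{op}\bigr)$.
This has $\gamma_\alpha$ in place of $\textsf{E}\|AY\|_\ast$, and for $\alpha<2$ the two can be far apart. Take $n$ even and $A$ block diagonal with blocks $\bigl(\begin{smallmatrix}0&1\\1&0\end{smallmatrix}\bigr)$. Its diagonal vanishes, $\|A\|_{op}=1$, $\|A\|_{HS}=\sqrt n$, and $A^TB_\ast$ is the unit ball of $\ell_1^n$ (if $\alpha\le1$) or of $\ell_\alpha^n$ (if $1<\alpha\le2$). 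A Varshamov--Gilbert packing by $e^{cn}$ normalized $0/1$ vectors gives $\gamma_\alpha(A^TB_\ast)\gtrsim\sqrt n=\|A\|_{HS}$. So the chaining route returns exactly the forbidden term $p^{1/\alpha}K^2\|A\|_{HS}$, although $\textsf{E}\|Y\|_\ast$ is at most of order $K(\log n)^{1/\alpha}$, resp.\ $Kn^{1-1/\alpha}$.

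``Interpolating between $\alpha=1$ and $\alpha=2$'' is not a mechanism either. For $1<\alpha<2$ a $\psi_\alpha$ variable is $\psi_1$ but in general not $\psi_2$, and the $\alpha=1$ inequality only gives the exponent $1/2$ instead of $\alpha/2$.

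\medskip
The expectation is actually the harmless part. For $1<\alpha\le2$, Jensen's inequality and $\sum_i\|a_i\|_2^{q}\le\|A\|_{op}^{q-2}\|A\|_{HS}^2$ (rows $a_i$, $q=\alpha^\ast$) give $\textsf{E}\|AY\|_{\alpha^\ast}\lesssim_\alpha K\|A\|_{op}^{1-\theta}\|A\|_{HS}^{\theta}$ with $\theta=2/\alpha^\ast$. Weighted AM--GM then gives $p^{1/\alpha}\|A\|_{op}^{1-\theta}\|A\|_{HS}^{\theta}\le\sqrt p\,\|A\|_{HS}+p^{2/\alpha}\|A\|_{op}$, because $\theta/2+2(1-\theta)/\alpha-1/\alpha=(1-2/\alpha)^2\ge0$. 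For $\alpha\le1$, the bound $\|\cdot\|_\infty\le\|\cdot\|_p$ together with the same row inequality for $q=p$ closes the estimate directly; a union bound over rows would instead cost $(\log n)^{1/\alpha}$.

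What is genuinely missing for $1<\alpha\le2$ is the deviation inequality
$\bigl\|\,\|AY\|_{\alpha^\ast}\bigr\|_{L^p}\lesssim_\alpha K\bigl(\|A\|_{op}^{1-\theta}\|A\|_{HS}^{\theta}+p^{1/\alpha}\|A\|_{op}\bigr)$.
One way to get it is Talagrand's two-level concentration for product measures with density proportional to $e^{-|t|^\alpha}$ ($1\le\alpha\le2$), transferred to general $\psi_\alpha$ coordinates by symmetrization and the contraction principle. Alternatively, one can bypass it with Lata{\l}a-type two-sided moment bounds for chaoses in variables with log-concave tails. Neither follows from Theorem~\ref{maj} or any other tool in this paper, and this is precisely the input the citation supplies.
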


\begin{mytheo}\label{BB}\cite[Proposition 2.2]{sambale2023some}, 
 Let \( B \) be a fixed \( m \times n \) matrix and let \( X = (X_1, \ldots, X_n) \in \mathbb{R}^n \) be a centered random vector with independent  coordinates satisfying \( \textsf{E}X_i^2 = 1 \) and \( \|X_i\|_{\psi_{\alpha}} \leq K \) for some \(\alpha \in (0,2]\). 
Then for any \(t \geq 0\), we have

\begin{equation}
\textsf{P}(|\|BX\|_2 - \|B\|_{\text{HS}}| \geq tK^2\|B\|_{\text{op}}) \leq 2 \exp(-t^\alpha / C(\alpha)).
\end{equation}
and 
    $$\|\|BX\|_2 - \|B\|_{\text{HS}}\|_{\psi_\alpha} \leq C(\alpha) K^2\|B\|_{\text{op}}.$$

\end{mytheo}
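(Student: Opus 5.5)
The plan is to derive the norm concentration for $\|BX\|_2$ from the Hanson--Wright-type inequality of Theorem~\ref{hason}, applied to the quadratic form $X^T (B^TB) X$. The matrix $M := B^T B$ is symmetric. Isotropy of the coordinates gives $\textsf{E} X^T M X = \operatorname{tr}(M) = \|B\|_{\mathrm{HS}}^2$. We also have $\|M\|_{\mathrm{op}} = \|B\|_{\mathrm{op}}^2$ and $\|M\|_{\mathrm{HS}} \le \|B\|_{\mathrm{op}}\|B\|_{\mathrm{HS}}$. Substituting these into Theorem~\ref{hason} yields, for every $s\ge0$,
\[
\textsf{P}\bigl(\bigl|\|BX\|_2^2 - \|B\|_{\mathrm{HS}}^2\bigr| \ge s\bigr) \le 2\exp\Bigl(-\tfrac{1}{C(\alpha)}\min\Bigl(\tfrac{s^2}{K^4\|B\|_{\mathrm{op}}^2\|B\|_{\mathrm{HS}}^2},\bigl(\tfrac{s}{K^2\|B\|_{\mathrm{op}}^2}\bigr)^{\alpha/2}\Bigr)\Bigr).
\]
We may assume $K \ge c_0$ for an absolute constant $c_0>0$, since $1=\textsf{E}X_i^2 \le C\|X_i\|_{\psi_\alpha}^2$ by Proposition~\ref{Lem_2.1}(f).

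Next we pass from squares to norms using the elementary inequality $|z-1|\ge \delta \Rightarrow |z^2-1|\ge \max(\delta,\delta^2)$ for $z\ge0$. Write $a=\|B\|_{\mathrm{HS}}$, $b=\|B\|_{\mathrm{op}}$, and set $\delta = tK^2 b/a$. Then
\[
\textsf{P}\bigl(|\|BX\|_2 - a|\ge tK^2 b\bigr) \le \textsf{P}\bigl(|\|BX\|_2^2-a^2|\ge a^2\max(\delta,\delta^2)\bigr),
\]
and we plug $s = a^2\max(\delta,\delta^2)$ into the bound above. There are two regimes.

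\emph{Case $\delta\le1$.} Here $s=a^2\delta=tK^2ab$. The Gaussian term equals $t^2$, and the other term equals $(ta/(b))^{\alpha/2}\ge t^{\alpha/2}\cdot\ldots$. This is the step I expect to be the main obstacle: we need the minimum to dominate $t^\alpha$ (up to constants), while the second term only gives exponent $\alpha/2$. The fix is to use $s$ in both forms together with $a\ge b$, via $\max(\delta,\delta^2)\ge\delta^2$. Then $s\ge t^2K^4b^2$, so the second term is at least $(t^2K^2)^{\alpha/2}\ge c\,t^\alpha$. For the first term, use $s=tK^2ab$ directly, which gives $t^2 \ge t^\alpha$ when $t\ge1$.

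\emph{Case $\delta>1$.} Here $s=a^2\delta^2=t^2K^4b^2$. The second term is $(t^2K^2)^{\alpha/2}\gtrsim t^\alpha$. The first term is $t^4K^4b^2/a^2 \ge t^2 K^4 \cdot (t^2b^2/a^2)\ge t^2 K^4/K^4 \cdot$, and since $\delta>1$ this is at least $t^2$.

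In both cases the minimum is $\gtrsim t^\alpha$ for $t\ge1$, using $\alpha\le2$ and $K\gtrsim1$. For $t<1$ the claimed bound is trivial after enlarging $C(\alpha)$, because $2e^{-t^\alpha/C}\ge1$ once $C\ge 1/\ln2$. This proves the tail inequality
\[
\textsf{P}\bigl(|\|BX\|_2 - \|B\|_{\mathrm{HS}}| \ge tK^2\|B\|_{\mathrm{op}}\bigr) \le 2\exp\bigl(-t^\alpha/C(\alpha)\bigr).
\]

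Finally, this tail bound says $\eta := \|BX\|_2 - \|B\|_{\mathrm{HS}}$ satisfies $\textsf{P}(|\eta|\ge u)\le 2\exp(-u^\alpha/(C(\alpha)^{1/\alpha}K^2b)^\alpha)$. Proposition~\ref{Lem_2.1}(c) then gives $\|\eta\|_{\psi_\alpha}\le C(\alpha)K^2\|B\|_{\mathrm{op}}$, which is the second assertion.
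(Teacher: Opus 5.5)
Your argument is correct. The paper gives no proof of its own (it imports the result from Sambale's Proposition 2.2), and what you wrote is the standard Rudelson--Vershynin derivation: apply Theorem~\ref{hason} to $B^TB$, use $|z-1|\ge\delta\Rightarrow|z^2-1|\ge\max(\delta,\delta^2)$, and compare the exponents in the two cases $\delta\le1$ and $\delta>1$.

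Only cosmetic repairs are needed. The lower bound on $K$ is $K\ge c_0(\alpha)$ rather than an absolute constant, since (f) with $p=2$ gives $1\le (C2^{1/\alpha}K)^2$. In the case $\delta>1$, the garbled line for the first term should simply read $t^4K^4b^2/a^2=t^2\delta^2\ge t^2$.
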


\begin{mylem}\label{lem1}
 Let \( B \in \mathbb{R}^{l \times m} \) be a fixed matrix and  \( A \in \mathbb{R}^{m \times n} \) be a mean zero, isotropic and \(\alpha\)-subexponential matrix with \(\alpha\)-subexponential parameter \( K \). Then the random process

\[ Z_x := \|B A x\|_2 - \|B\|_{HS} \|x\|_2 \]
has \(\alpha\)-subexponential increments with

\[ \|Z_x - Z_y\|_{\psi_{\alpha}} \leq C(\alpha) K^{4/\alpha} \|B\|_{op} \|x - y\|_2, \ \forall x, y \in \mathbb{R}^n. \]
\end{mylem}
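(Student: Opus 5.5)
We follow the strategy of Jeong et al.\ for the subgaussian case, with Theorem~\ref{BB} and Theorem~\ref{hason} replacing the Hanson--Wright-type inputs. By homogeneity we may assume $\|B\|_{op}=1$ (rescale $B$). We also assume $K\ge c>0$, which holds automatically by isotropy since $1=\textsf{E}\langle A_i,e\rangle^2\le CK^2$. The quantity $K^{4/\alpha}$ should arise from the Hanson--Wright tail at the critical scale $t\approx K^2\|B\|_{op}$ raised to the power $\alpha/2$, combined with the $\psi_\alpha$ conversion.

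\emph{Step 1: the case $y=0$, and $x,y$ of equal norm.}
For a fixed unit $x$, the vector $Ax$ has independent coordinates $\langle A_i,x\rangle$. These are centered with unit variance (by isotropy) and satisfy $\|\langle A_i,x\rangle\|_{\psi_\alpha}\le K$. Theorem~\ref{BB} then gives $\|\,\|BAx\|_2-\|B\|_{HS}\|_{\psi_\alpha}\le C(\alpha)K^2$, and by homogeneity $\|Z_x\|_{\psi_\alpha}\le C(\alpha)K^2\|x\|_2$. Since $K\gtrsim1$, this is within the claimed bound; this settles the case $y=0$.

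For $\|x\|_2=\|y\|_2=1$ with $x\neq y$, set $u=x-y$ and $v=x+y$. Then $\langle u,v\rangle=0$, so the coordinates of $Au$ and $Av$ are uncorrelated. Write
\[
Z_x-Z_y=\frac{\|BAx\|_2^2-\|BAy\|_2^2}{\|BAx\|_2+\|BAy\|_2}=\frac{\langle BAu,BAv\rangle}{\|BAx\|_2+\|BAy\|_2}.
\]
The numerator is a quadratic form with mean $\|B\|_{HS}^2\langle u,v\rangle=0$. It can be written as $X^\top M X$ with $X=(Au,Av)$ (independent across rows, each row having two coordinates) and $M$ the symmetrized block matrix built from $B^\top B$, which satisfies $\|M\|_{HS}\le\|B\|_{HS}$ and $\|M\|_{op}\le1$. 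Theorem~\ref{hason} is stated for independent coordinates. We therefore either apply it after decoupling the two coordinates within each row, or apply a vector version, which follows from the same proof. This gives a tail of
\[
\exp\Big(-c\min\Big(\frac{t^2}{K^4\|u\|^2\|B\|_{HS}^2},\Big(\frac{t}{K^2\|u\|}\Big)^{\alpha/2}\Big)\Big),
\]
using $\|v\|_2\le2$.

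\emph{Step 2: dividing by the denominator.}
On the event $\{\|BAx\|_2\ge\|B\|_{HS}/2\}$, which by Step~1 has probability at least $1-2\exp(-c(\|B\|_{HS}/K^2)^\alpha)$, the numerator tail at level $s\|B\|_{HS}\|u\|$ becomes
\[
\exp\big(-c\min(s^2/K^4,\ (s\|B\|_{HS}/K^2)^{\alpha/2})\big).
\]
We then split into two regimes.
\begin{itemize}
\item If $s\lesssim\|B\|_{HS}$, we use the trivial bound $|Z_x-Z_y|\le\|BAu\|_2$. Step~1 applied to $u$ gives $\|BAu\|_2\le\|B\|_{HS}\|u\|+O(K^2\|u\|)$ with $\psi_\alpha$ tails, which handles the large-deviation range.
\item In the complementary range, both exponents are at least of order $(s/K^{4/\alpha})^{\alpha}$ in the relevant regime. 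Indeed, $s^2/K^4\ge(s/K^{4/\alpha})^\alpha$ for $s\ge K^{4/\alpha}$ and $\alpha\le2$, and $(s\|B\|_{HS}/K^2)^{\alpha/2}\ge(s^2/K^4)^{\alpha/2}$ when $s\le\|B\|_{HS}$.
\end{itemize}
Collecting these tails, Proposition~\ref{Lem_2.1}(c) yields $\|Z_x-Z_y\|_{\psi_\alpha}\le C(\alpha)K^{4/\alpha}\|u\|_2$. The complement of the good event is absorbed by the same case split: when it fails, $\|B\|_{HS}\lesssim K^2 s$, and we are back in the first regime.

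\emph{Step 3: general $x,y$.}
Assume $\|x\|_2\ge\|y\|_2>0$ and put $\bar y=(\|x\|_2/\|y\|_2)\,y$. Then
\[
|Z_y-Z_{\bar y}|=\Big(\frac{\|x\|_2}{\|y\|_2}-1\Big)|Z_y|=\big(\|x\|_2-\|y\|_2\big)\,\Big|Z_{y/\|y\|_2}\Big|.
\]
By Step~1 this has $\psi_\alpha$-norm at most $C K^2\|x-y\|_2$. Moreover $\|x-\bar y\|_2\le2\|x-y\|_2$, and Step~2 (rescaled) applies to the equal-norm pair $x,\bar y$. The triangle inequality of Proposition~\ref{Lem_2.1}(i) then finishes the argument.

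The main obstacle is Step~2. It requires justifying the Hanson--Wright input for row-wise (not coordinatewise) independence, and carefully balancing the two Hanson--Wright regimes against the lower bound on the denominator so that the resulting exponent is exactly $K^{4/\alpha}$.
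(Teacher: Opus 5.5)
Your architecture matches the paper's proof. You normalize $\|B\|_{op}=1$ and work first with unit vectors through $Z_x-Z_y=\langle BAu,BAv\rangle/(\|BAx\|_2+\|BAy\|_2)$, splitting at $s\asymp\|B\|_{HS}$. You use Theorem~\ref{BB} both for large $s$ (via $|Z_x-Z_y|\le\|BA(x-y)\|_2$) and for the event $\{\|BAx\|_2<\|B\|_{HS}/2\}$. You bound the numerator by Hanson--Wright and extend to general $x,y$ by radial rescaling. There are two small slips. In your first bullet the inequality is reversed: the trivial bound is what handles $s\gtrsim\|B\|_{HS}$. And the bad-denominator event needs no ``return to the first regime'': in the range $s\le 2\|B\|_{HS}$ its probability $2\exp(-c(\|B\|_{HS}/K^2)^\alpha)$ is already at most $2\exp(-c'(s/K^2)^\alpha)$.

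The step that does not go through as written is the numerator bound. Theorem~\ref{hason} requires a vector with independent coordinates. In $X=(Au,Av)$, however, the entries $\langle A_i,u\rangle$ and $\langle A_i,v\rangle$ coming from the same row are only uncorrelated. They cannot be ``decoupled within a row'', and a block version of the $\alpha$-subexponential Hanson--Wright inequality does not follow directly from the cited statement. It would need its own proof, e.g.\ treating the diagonal part $\sum_i (B^\top B)_{ii}\langle A_i,u\rangle\langle A_i,v\rangle$ as a sum of independent centered $\psi_{\alpha/2}$ variables and decoupling the off-diagonal part across rows.

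The missing idea is polarization, which is exactly what the paper uses: $2\langle BAu,BAv\rangle=\|BA(u+v)\|_2^2-\|BAu\|_2^2-\|BAv\|_2^2$. Each term is $X_w^\top B^\top B X_w$ with $X_w=Aw$, whose coordinates $\langle A_i,w\rangle$ are independent, centered, and have $\psi_\alpha$-norm at most $K\|w\|_2$. The means $\|w\|_2^2\|B\|_{HS}^2$ cancel because $\langle u,v\rangle=0$. So Theorem~\ref{hason} applies directly to each of the three terms with $\|B^\top B\|_{op}=1$ and $\|B^\top B\|_{HS}\le\|B\|_{HS}$. With $u$ normalized to unit length, $\|u+v\|_2^2=1+\|v\|_2^2\le 5$ and $\|v\|_2\le 2$, so you recover the tail you wrote. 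With that substitution your Step~2 balancing gives $\exp(-cs^\alpha/K^4)$, hence the factor $K^{4/\alpha}$, as in the paper.
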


\begin{proof}
The statement is invariant under scaling for \(B\). So without loss of generality, we will assume \(B\) has operator norm \(\|B\|_{op} = 1\), $K\geq1$.

\textbf{Step 1:} Show \(\alpha\)-subexponential increments for \(x, y \in S^{n-1}\) on the unit sphere

Without loss of generality, assume \(x \neq y\) and define

\[ p := \textsf{P} \left( \frac{|Z_x - Z_y|}{\|x - y\|_2} \geq s \right) = \textsf{P} \left( \frac{\|B A x\|_2 - \|B A y\|_2}{\|x - y\|_2} \geq s \right). \]

We need to bound this tail probability by a $\alpha-subexponential tail$. Consider the following two cases:

\begin{itemize}
    \item \(s \geq 2\|B\|_{HS}\). Denote \(u := \frac{x-y}{\|x-y\|_2}\) and by triangle inequality we have
    \[ p \leq \textsf{P} \left( \frac{\|B A (x - y)\|_2}{\|x - y\|_2} \geq s \right) = \textsf{P} \left( \|B A u\|_2 \geq s \right) =: p_1. \]
    \item \(0 < s < 2\|B\|_{HS}\). 
    For $0<s\leq 1$, $p\leq 1\leq 2\exp(-cs^\alpha)$, where $c_1$ is a  small enough constant satisfies the inequality.
    
    For $1<s\leq 2\|B\|_{HS}$, we write \(p\) as
    \[ p = \textsf{P} \left( |Z| \geq s (\|B A x\|_2 + \|B A y\|_2) \right) \quad \text{where} \quad Z := \frac{\|B A x\|_2^2 - \|B A y\|_2^2}{\|x - y\|_2}. \]
\end{itemize}

Then we have

\[ p \leq \textsf{P} \left( |Z| \geq s \|B A x\|_2 \right) \leq \textsf{P} \left( \|B A x\|_2 \leq \frac{1}{2} \|B\|_{HS} \right) + \textsf{P} \left( |Z| > \frac{s}{2} \|B\|_{HS} \right) =: p_2 + p_3, \]

where \(p_2\) and \(p_3\) denote the first and second summand respectively.

\textbf{Bound for $p_1$.}

From \(s \geq 2\|B\|_{HS}\) we have

\[ p_1 = \textsf{P} \left( \|B A u\|_2 - \|B\|_{HS} \geq s - \|B\|_{HS} \right) \leq \textsf{P} \left( \|B A u\|_2 - \|B\|_{HS} \geq \frac{s}{2} \right). \]

Applying Theorem \ref{BB} to the random vector \(A u\) we get

\[ p_1 \leq 2 \exp \left( -{C(\alpha)} (\frac{s}{K^2 } )^\alpha\right). \]

\textbf{Bound for $p_2$.}

Applying Theorem \ref{BB} to the random vector \(A x\) and note that \(\|B\|_{HS} > \frac{1}{2}s\), we get

\[ p_2 \leq 2 \exp \left( -{C(\alpha)} \frac{\left( \|B\|_{HS} / 2 \right)^\alpha}{K^{2\alpha} } \right) \leq 2 \exp \left( -{C(\alpha)}( \frac{s}{4K^2 })^\alpha \right). \]

\textbf{Bound for $p_3$.}

Denote \(u := \frac{x-y}{\|x-y\|_2}\) and \(v := x + y\), then \(\langle u, v \rangle = 0\) since \(\|x\|_2 = \|y\|_2 = 1\). We can write \(Z\) as

\[ Z = \frac{\langle B A (x - y), B A (x + y) \rangle}{\|x - y\|_2} = \langle B A u, B A v \rangle. \]

Notice that

\[ 2 \langle B A u, B A v \rangle = \langle B A (u + v), B A (u + v) \rangle - \langle B A u, B A u \rangle - \langle B A v, B A v \rangle. \]

Let us also denote \(X_w := A w\) for \(w \in \mathbb{R}^n\), then from \(\textsf{E} X_w X_w^T = \|w\|_2^2 I_n\) we have

\[ \textsf{E} \|B X_w\|_2^2 = \textsf{E} \text{tr} (B^T B X_w X_w^T) = \text{tr} (B^T B \textsf{E} (X_w X_w^T)) = \|w\|_2^2 \|B\|_{HS}^2. \]

Thus we can further write \(Z\) as

\begin{align*}
Z &= \frac{1}{2} \|B X_{u+v}\|_2^2 - \frac{1}{2} \|B X_u\|_2^2 - \frac{1}{2} \|B X_v\|_2^2 \\
  &= \frac{1}{2} \left( \|B X_{u+v}\|_2^2 - \textsf{E} \|B X_{u+v}\|_2^2 \right) - \frac{1}{2} \left( \|B X_u\|_2^2 - \textsf{E} \|B X_u\|_2^2 \right) \\
  &\quad - \frac{1}{2} \left( \|B X_v\|_2^2 - \textsf{E} \|B X_v\|_2^2 \right) \\
  &= \frac{1}{2} Y_{u+v} - \frac{1}{2} Y_u - \frac{1}{2} Y_v,
\end{align*}

where the second equality uses the fact that \(Z\) is mean zero and in the last equality \(Y_w := \|B X_w\|_2^2 - \textsf{E} \|B X_w\|_2^2. \)  Therefore
\begin{align*}
p_3 &= \textsf{P}(|Y_{u+v} - Y_u - Y_v| > s \|B\|_{HS})\\
&\leq \textsf{P}(|Y_{u+v}| + |Y_u| + |Y_v| > s \|B\|_{HS})\\
&\leq \textsf{P} \left( |Y_{u+v}| \geq \frac{s}{2} \|B\|_{HS} \right) + \textsf{P} \left( |Y_u| + |Y_v| > \frac{s}{2} \|B\|_{HS} \right)\\
&\leq \textsf{P} \left( |Y_{u+v}| \geq \frac{s}{2} \|B\|_{HS} \right) + \textsf{P} \left( |Y_u| \geq \left( 1 - \frac{1}{8} \|v\|_2^2 \right) \frac{s}{2} \|B\|_{HS} \right)\\
&\quad+ \textsf{P} \left( |Y_v| > \frac{1}{8} \|v\|_2^2 \cdot \frac{s}{2} \|B\|_{HS} \right)\\
&=: p_4 + p_5 + p_6.
\end{align*}
We will bound \(p_4, p_5\) and \(p_6\) through the new Hanson-Wright inequality \eqref{hason}.

For any non-zero vector \(w\), define \(\bar{w} := \frac{w}{\|w\|_2}\). It is easy to see that \(X_w = \|w\|_2 X_{\bar{w}}\) and \(Y_w = \|w\|_2^2 Y_{\bar{w}}\). Also note that

\[ \|B^T B\|_{op} = \|B\|^2_{op} = 1, \quad \|B^T B\|_{HS} \leq \|B^T\| \|B\|_{HS} = \|B\|_{op} \|B\|_{HS} = \|B\|_{HS}, \]

so by Theorem 1.5 we have

\[ \textsf{P}(|Y_{\bar{w}}| \geq r) \leq 2 \exp \left( -{C(\alpha)} \min \left( \frac{t^2}{K^4 \|B\|_{HS}^2}, \left( \frac{t}{K^2 } \right)^{\frac{\alpha}{2}} \right) \right). 
\]

Hence for \(0 \leq t \leq \|w\|_2^2 \|B\|_{HS}^2\),

\[ \textsf{P}(|Y_w| \geq r) = \textsf{P} \left( |Y_{\bar{w}}| \geq \frac{r}{\|w\|_2^2} \right) \leq 2 \exp \left( -{C(\alpha)}  \left( \frac{t^2}{K^4 \|B\|_{HS}^2\|w\|_2^2} \right) \right)+2\exp\left( -{C(\alpha)}
\left( \frac{t}{K^2\|w\|_2^2 } \right)^{\frac{\alpha}{2}}\right).
\]

Now we apply Equation (16) to \(p_4, p_5\) and \(p_6\).

\begin{itemize}
    \item For \(p_4\). Since \(s < 2\|B\|_{HS}\) and \(\|u + v\|_2 = \sqrt{1 + \|v\|_2^2} \in [1, \sqrt{5})\), we can conclude that
    \[ \frac{s}{2} \|B\|_{HS} < \|B\|_{HS}^2 \leq \|u + v\|_2^2 \|B\|_{HS}^2 ,\]
    and therefore
    \[ p_4 \leq 2 \exp \left( -{C(\alpha)}\frac{s^2}{4 \|u + v\|_2^4 K^4} \right) + 2 \exp \left(-{C(\alpha)} \left(\frac{s\|B\|_{HS}}{2 K^2\|u + v\|_2^2}\right)^{\frac{\alpha}{2}} \right). \]

    Since $\|B\|_{HS}\geq\|B^TB\|_{HS}\geq\|B^TB\|_{op}=\|B\|^2_{op}=1$ , and \(\|u + v\|_2  \in [1, \sqrt{5})\)

    \[ p_4 \leq 2 \exp \left( -{C(\alpha)}\frac{s^2}{ K^4} \right) + 2 \exp \left(-{C(\alpha)} \frac{s^\alpha}{ K^\alpha} \right). \]
    
    \item For \(p_5\). Notice that \(\|u\|_2 = 1\) and \(1 - \frac{1}{8} \|v\|_2^2 \in (\frac{1}{2}, 1]\), so
    \[ p_5 \leq 2 \exp \left( -{C(\alpha)}\frac{s^2}{ K^4} \right) + 2 \exp \left(-{C(\alpha)} \frac{s^\alpha}{ K^\alpha} \right). \]
    \item For \(p_6\). If \(v = 0\) (i.e. \(x = -y\)), then \(p_6 = \textsf{P}(0 > 0) = 0\). Now assume \(v \neq 0\), since $\|v\|_2\leq 2,\|B\|_{HS}\geq1$
    
    then like $p_4$ we have
     \[ p_6 \leq 2 \exp \left( -{C(\alpha)}\frac{s^2}{ K^4} \right) + 2 \exp \left(-{C(\alpha)} \frac{s^\alpha}{ K^\alpha} \right). \]
\end{itemize}
\textbf{Combining the previous inequalities, we get the following bounds for  } $p$.

So far, we have shown that

\[ p \leq \max\{p_1, p_2 + p_3\} \quad \text{and} \quad p_3 \leq p_4 + p_5 + p_6, \]

where \(p_i \leq 4 \exp \left( \frac{-cs^\alpha}{K^4} \right)\) for some absolute constant \(c\) and \(1 \leq i \leq 6\). Note that \(p \leq 1\) and the inequality \(\min\{1, 16e^{-x}\} \leq 2e^{-x/4}\) . So we get

\[ p \leq \min \left\{ 1, 16 \exp \left( \frac{-cs^\alpha}{K^4 } \right) \right\} \leq 2 \exp \left( \frac{-cs^\alpha}{4K^4 } \right). \]

\textbf{Step 2:} Show $\alpha-subexponential$ increments for all \(x\) and \(y\)

Without loss of generality, we can assume \(\|x\|_2 = 1\) and \(\|y\|_2 \geq 1\). Let \(\bar{y} := \frac{y}{\|y\|_2}\) be the projection of \(y\) onto unit ball, then by triangle inequality,
\[ \|Z_x - Z_y\|_{\psi_\alpha} \leq C(\alpha)\left(\|Z_x - Z_{\bar{y}}\|_{\psi_\alpha} + \|Z_y - Z_{\bar{y}}\|_{\psi_\alpha}\right) =: R_1 + R_2. \]
Here \(R_1\) it is bounded by \(C(\alpha)K^{\frac{\alpha}{4}} \|x - \bar{y}\|_2\) since \(x, \bar{y} \in S^{n-1}\), and
\[ R_2 = \|(\|y\|_2 - 1) Z_{\bar{y}}\|_{\psi_\alpha} = \|y - \bar{y}\|_2 \|Z_{\bar{y}}\|_{\psi_\alpha} \leq C(\alpha)K^{\frac{\alpha}{4}} \|x - \bar{y}\|_2, \]
where the first equality uses \(Z_y = \|y\|_2 Z_{\bar{y}}\). The second equality is true since \(\|y\|_2 - 1 = \|y - \bar{y}\|_2\) and the last inequality follows from Theorem \ref{BB}. Combining these bounds, we get
\[ \|Z_x - Z_y\|_{\psi_\alpha} \leq C(\alpha)K^{\frac{\alpha}{4}}    (\|x - \bar{y}\|_2 + \|y - \bar{y}\|_2). \]

Finally, note that \(\|x\|_2 = 1\), so by non-expansiveness of projection, \(\|x - \bar{y}\|_2 \leq \|x - y\|_2\), and by definition of projection, \(\|y - \bar{y}\|_2 \leq \|y - x\|_2\). This completes the proof.
\end{proof}

\begin{proof}[Proof of Theorem~\ref{thm:row}]
Let  $Z_x := \|BAx\|_2 -\| B\|_{HS}\|x\|_2.$

For the expectation bound, take an arbitrary $ y \in T$,  then from the triangle inequality, we have 
$$\textsf{E} \sup_{t \in T} |Z_t| \leq \textsf{E} \sup_{t \in T} |Z_t - Z_y| + \textsf{E} |Z_y|. $$
Using Lemma \ref{lem1} and Theorem \ref{maj}, we get
$$\textsf{E} \sup_{t \in T} |Z_t - Z_y| \leq \textsf{E} \sup_{t, y \in T} |Z_t - Z_y| \lesssim_\alpha M \gamma_\alpha(T,d_2). $$
Using Lemma \ref{lem1} and property (f) in Proposition~\ref{Lem_2.1}, we get
$$\textsf{E} |Z_y| \lesssim \|Z_y\|_{\psi_\alpha} = \|Z_y - Z_0\|_{\psi_\alpha} \lesssim K^{4/\alpha}  \, \|B\|_{op}\,\|y\|_2. $$
Therefore  $\textsf{E} \sup_{t \in T} |Z_t| \leq C(\alpha) K^{4/\alpha}\|B\|_{op}(\gamma_\alpha(T,d_2)+\|y\|_2).$

For the high probability bound, notice that the result is trivial when $ u < 1.$ 
When $ u \geq 1,$  fix an arbitrary  $y \in T$ and use triangle inequality again to get 
$$\sup_{t \in T} |Z_t| \leq \sup_{t \in T} |Z_t - Z_y| + |Z_y| \leq \sup_{t, t' \in T} |Z_t - Z_{t'}| + |Z_y|.$$ 
 
 Let\[p_1=\textsf{P}\left( \sup_{t, t' \in T} |Z_t - Z_{t'}| \geq c(\alpha)K^{4/\alpha}( u\cdot rad(T) + \gamma_\alpha(T)) \right)  ,\]
 \[p_2=\textsf{P}\left( |Z_y|\ge c(\alpha)K^{4/\alpha}u\cdot rad(T)\right).\]
 
By Theorem~\ref{maj}, we obtain the estimate
\[
p_1 \le C \exp(-u^\alpha).
\]
On the other hand, property (b) in Proposition~\ref{Lem_2.1} yields
\[
p_2 \le 2 \exp(-u^\alpha).
\]
Therefore, we have
\[
\textsf{P}\Bigg(
\sup_{x \in T} \bigl| \|BAx\|_2 - \|B\|_{HS} \|x\|_2 \bigr|
\ge c(\alpha)K^{4/\alpha}\, \|B\|_{op}
\bigl( \gamma_\alpha(T) + u\, \operatorname{rad}(T) \bigr)
\Bigg)
\le p_1+p_2\le C \exp(-u^\alpha).
\]

\end{proof}

\section{Proof of Theorem~\ref{thm:column}}
Similar to the proof of Theorem~\ref{thm:row}, our goal is to show that the process
\[
\bigl\{\|Ax\|_2 - \|x\|_2\bigr\}_{x \in T}
\]
has uniformly $\alpha$-subexponential increments. 
To this end, we first consider a  more simple special case.

\begin{mylem}\label{lem:Ax-1}

 Let $A$ be an $m \times n$ matrix whose columns $A_i$ are independent,  $\alpha$-subexponential random vectors in $\mathbb{R}^m$ satisfying $\|A_i\|_2 = 1$ almost surely and $\|\langle A_i,x\rangle\|_{\psi_\alpha} \leq K$ for all $x\in S^{m-1}$. Then we have
 \[\|\|Ax\|_2-1\|_{\psi_\alpha}\leq C(\alpha) K.\]

\end{mylem}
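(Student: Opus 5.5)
We may assume $x\in S^{n-1}$; this is the case in which the lemma is used. The plan is to work with $L^p$ moments rather than directly with the $\psi_\alpha$ quasi-norm. It suffices to prove
\[
M_p:=\sup\bigl\|\,\|A x\|_2-1\,\bigr\|_{L^p}\le C(\alpha)\,p^{1/\alpha}K\qquad\text{for all }p\ge 1,
\]
because moment growth of order $p^{1/\alpha}$ is equivalent to a $\psi_\alpha$ bound, by Markov's inequality and Proposition~\ref{Lem_2.1}(c). The supremum in $M_p$ is taken over all unit vectors $x$ and over all matrices satisfying the hypotheses of the lemma, in particular over all subfamilies of columns. It is finite a priori, since $\|Ax\|_2\le\sum_i|x_i|\le\sqrt n$.

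\textbf{Step 1: reduction to an off-diagonal chaos.} Since $\|A_i\|_2=1$ and $\|x\|_2=1$,
\[
\|Ax\|_2^2-1=\sum_{i\neq j}x_ix_j\langle A_i,A_j\rangle=:S,
\]
which is a mean-zero vector-valued chaos. For $z\ge 0$ we have both $|z-1|\le|z^2-1|$ and $|z-1|\le|z^2-1|^{1/2}$. Hence $\|\,\|Ax\|_2-1\|_{L^p}\le\|S\|_{L^p}$, and also $\|\,\|Ax\|_2-1\|_{L^p}\le\|S\|_{L^{p/2}}^{1/2}\le\|S\|_{L^p}^{1/2}$.

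\textbf{Step 2: decoupling and a conditional bound.} Apply the standard decoupling inequality for off-diagonal chaos with the convex function $|\cdot|^p$; this is the random-selector argument, with an absolute constant. It gives
\[
\|S\|_{L^p}\le 4\sup_{I}\Bigl\|\sum_{i\in I,\,j\in I^c}x_ix_j\langle A_i,A_j'\rangle\Bigr\|_{L^p},
\]
where $A'$ is an independent copy of $A$. Condition on $A'$ and set $v:=\sum_{j\in I^c}x_jA_j'$. The variables $\langle A_i,v\rangle$, $i\in I$, are then independent and centered, with $\psi_\alpha$-norm at most $K\|v\|_2$. For such sums I would use the Gluskin--Kwapie\'n/Lata{\l}a moment bounds for independent $\psi_\alpha$ variables:
\[
\Bigl\|\sum a_i\xi_i\Bigr\|_{L^p}\le C(\alpha)\bigl(\sqrt p\|a\|_2+p^{1/\alpha}\|a\|_\infty\bigr)K\le C(\alpha)p^{1/\alpha}K\|a\|_2 ,
\]
where the last inequality uses $\alpha\le 2$. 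Applying this conditionally with $a=x_I$ and then integrating over $A'$ gives
\[
\|S\|_{L^p}\le C(\alpha)p^{1/\alpha}K\,\bigl\|\,\|A'_{I^c}x_{I^c}\|_2\bigr\|_{L^p}.
\]

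\textbf{Step 3: bootstrap.} Normalize $x_{I^c}$ to a unit vector $y$, so that $\|A'_{I^c}x_{I^c}\|_2=\|x_{I^c}\|_2\,\|A'_{I^c}y\|_2\le\|A'_{I^c}y\|_2$. The matrix $A'_{I^c}$ again satisfies the hypotheses of the lemma, so the definition of $M_p$ gives $\|\,\|A'_{I^c}y\|_2\|_{L^p}\le 1+M_p$. Writing $a:=C(\alpha)p^{1/\alpha}K$, Step 1 therefore yields both
\[
M_p\le a(1+M_p)\qquad\text{and}\qquad M_p\le\sqrt{a(1+M_p)} .
\]
If $a\le 1/2$, the first inequality gives $M_p\le 2a$. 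Otherwise, the second is a quadratic inequality in $M_p$ and gives $M_p\le a+\sqrt a\le 3a$. In both cases $M_p\lesssim_\alpha p^{1/\alpha}K$, which is what we want.

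The main obstacle is the sharp moment bound for sums of independent $\psi_\alpha$ variables when $\alpha<1$. The quasi-norm is not subadditive, so the estimate cannot come from Proposition~\ref{Lem_2.1}(i) alone; it has to be imported from Lata{\l}a's moment formula for sums of independent variables with log-convex tails (together with a symmetrization step). A secondary technical point is checking that the decoupling constant does not depend on $p$ and that the finiteness of $M_p$ justifies the bootstrap.
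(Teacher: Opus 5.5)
Your proof is correct. Its first two steps coincide with the paper's: the off-diagonal expansion, decoupling, and the conditional moment bound for a weighted sum of independent $\psi_\alpha$ variables. The paper uses that last bound without comment; you correctly flag it as an imported result. The difference is in how the self-referential estimate is closed.

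\textbf{The paper's route.} It decouples to the full form $\langle Ax,A'x\rangle$, so the conditional bound returns $\|\,\|A'x\|_2\|_{L^p}=\|\,\|Ax\|_2^2\|_{L^{p/2}}^{1/2}$ for the same $x$. This gives the recursion $f(p)\le CKp^{1/\alpha}\sqrt{f(p/2)+1}$ for $f(p)=\|\,\|Ax\|_2^2-1\|_{L^p}$, which is solved along $p=2^n$ starting from $f(2)$. Only at the end is $|z-1|^2\le|z^2-1|$ applied.

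\textbf{Your route.} You keep the selector form, take a supremum over unit vectors, and close with a quadratic self-bounding inequality at fixed $p$. This avoids the recursion bookkeeping but needs a priori finiteness. Restrict $M_p$ to matrices with at most $n$ columns, so that $\sqrt n$ is genuinely a uniform bound. In fact the supremum over $y\in S^{n-1}$ for the given law already suffices, since $A'_{I^c}x_{I^c}=A'\tilde x$, where $\tilde x$ agrees with $x$ off $I$ and vanishes on $I$.

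\textbf{What your route buys.} The real gain is the small-$a$ case, handled via $|z-1|\le|z^2-1|$. The paper assumes ``$K$ large enough'', which is not a valid reduction here, for three reasons:
\begin{itemize}
\item the normalization $\|A_i\|_2=1$ destroys homogeneity;
\item the best admissible $K$ never exceeds $(\ln 2)^{-1/\alpha}$;
\item $K$ can be of order $m^{-1/2}$, e.g.\ for columns uniform on the sphere.
\end{itemize}
As written, the paper's argument only yields $C(\alpha)\max\{K,1\}$. Its final square-root step alone cannot give better than order $\sqrt K$ at fixed $p$. Your case $a\le 1/2$ is exactly what produces the linear dependence on $K$. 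Corollary~\ref{coro:column} needs this after rescaling by $\lambda$; otherwise one gets only $\max\{K,\lambda\}$.

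\textbf{One slip.} The displayed bound $\sqrt p\|a\|_2+p^{1/\alpha}\|a\|_\infty$ is right for $\alpha\le 1$. For $1<\alpha<2$ the second term must be $p^{1/\alpha}\|a\|_{\alpha^*}$ with $\alpha^*=\alpha/(\alpha-1)$ (Gluskin--Kwapie\'n/Talagrand). The $\|a\|_\infty$ version fails, for instance, for $k$ equal weights and $p\gg k$. Since $\alpha^*\ge 2$ gives $\|a\|_{\alpha^*}\le\|a\|_2$, the consequence $C(\alpha)p^{1/\alpha}K\|a\|_2$ that you actually use is unaffected.
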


\begin{proof}
With loss of generality, we can assume $K$ is large enough. Since
$$\|Ax\|_2^2 = \sum_{i=1}^{n} x_i^2 \|A_i\|_2^2 + \sum_{i \neq j} x_i x_j \langle A_i, A_j \rangle,$$
we have
$$\|Ax\|_2^2 - 1 = \sum_{i \neq j} x_i x_j \langle A_i, A_j \rangle.$$
For any $p\geq1$,
\begin{align}
    \left( \textsf{E} \left| \|Ax\|_2^2 - 1 \right|^p \right)^{\frac{1}{p}}& = \left( \textsf{E} \left| \sum_{i \neq j} x_i x_j \langle A_i, A_j \rangle \right|^p \right)^{\frac{1}{p}}\nonumber\\
    &\leq \left( \textsf{E} \left| 4 \sum_{i \neq j} x_i x_j \langle A_i, A'_j \rangle \right|^p \right)^{\frac{1}{p}} \nonumber\\
    &= \left( \textsf{E} \left| 4 \langle Ax, A'x \rangle \right|^p \right)^{\frac{1}{p}}.
\end{align}
Here $A'$ is an independent copy of $A$.
The second inequality here can refer to \cite{vershynin2018high} theorem 6.1.1 :the decoupling technology.

Note that $\langle A_i, A'_j \rangle$ is conditional $\alpha$-subexponential, and the norm of conditional $\alpha$-subexponential is bounded by $CK\|AX\|_2$.

Thus we have 
\begin{align*}
    \left( \textsf{E} \left| 4 \langle Ax, A'x \rangle \right|^p \right)^{\frac{1}{p}}&\leq\left( \textsf{E} \left(\|Ax\|_2 CK p^{\frac{1}{\alpha}} \right)^p \right)^{\frac{1}{p}}\\
    &= CK p^{\frac{1}{\alpha}}\| \|Ax\|_2 \|_p\\
    &= CK p^{\frac{1}{\alpha}}\sqrt{ \|\|Ax\|_2^2\|_{\frac{2}{p}}}.
\end{align*}

We can easily get
\begin{align*}
\textsf{E}\|Ax\|_2^2 &= \textsf{E}\sum_{i=1}^{n} x_i^2 \|A_i\|_2^2 + \textsf{E}\sum_{i \neq j} x_i x_j \langle A_i, A_j \rangle\\
&=\textsf{E}\sum_{i=1}^{n} x_i^2 \|A_i\|_2^2\\
&=\sum_{i=1}^{n} x_i^2 \\
&=1.
\end{align*}

For $p>0$, we define $f(p):=\|\|Ax\|^2 _2-1\|_p$, then we have

	(i)$f(p)\leq CKp^{\frac{1}{\alpha}}\sqrt{f(\frac{p}{2})+1},$
    
    (ii)$\leq f(2)\leq CK2^{\frac{1}{\alpha}}\sqrt{\textsf{E}\|Ax\|_2^2}=CK2^{\frac{1}{\alpha}}.$

Let $CK=c_0$. As K is large enough, without loss of generality, we may assume that $c_0\geq 1.$

Consider the following sequence:
$$\begin{cases} 
a_{n+1} = c_0 \cdot 2^{\frac{n+1}{\alpha}} \sqrt{a_n + 1}, & n \geq 2, \quad (1) \\
a_2 = c_0 \cdot 2^{\frac{2}{\alpha}} .\quad (2)
\end{cases}$$

Obviously,  $a_n \geq 1 \text{ for all } n \geq 2$.
Let \( b_n = \log_2 a_n \).

From (1), we get:
\begin{align*}
    b_{n+1}& = \log_2 \left( c_0 2^{\frac{n+1}{\alpha}} \sqrt{2^{b_n} + 1} \right)\\
    &= \log_2 \left( \frac{b_0}{2^2} \cdot 2^{\frac{n+1}{\alpha}} \cdot c_0 \sqrt{\frac{2^{b_{n+1}}}{2^{b_n}}} \right)\\
    &\leq \frac{1}{2} b_n + \frac{n+1}{\alpha} + \log_2 \left( 2c_0\right).
\end{align*}

From (2), we get:
\[ b_2 \leq \frac{2}{\alpha} + \log_2 (2c_0). \]

Thus, we have:
\begin{align*}
     b_n &\leq \frac{1}{2} b_{n-1} + \frac{n}{\alpha} + \log_2 (2c_0)\\
     &\leq \frac{1}{4} b_{n-2} + \frac{n}{\alpha} + \log_2 (2c_0) + \frac{1}{2} \left( \frac{n-1}{\alpha} + \log_2 (2c_0) \right)\\
     &\leq \ldots\\
     &\leq \left( \frac{1}{2} \right)^{n-2} b_2 + \sum_{j=0}^{n-3} \left( \frac{1}{2} \right)^j \left( \frac{n-j}{\alpha} + \log_2 (2c_0) \right) \\
     &\leq \sum_{j=0}^{n-2} \left( \frac{1}{2} \right)^j \left( \frac{n-j}{\alpha} + \log_2 (2c_0) \right)\\
     &\leq 2 \frac{n}{\alpha} + 2 \log_2 (2c_0) .
\end{align*}




Thus, correspondingly,
\[ a_n = 2^{b_n} \leq 4c_0^2 \cdot 2^{\frac{2n}{\alpha}}. \]

Combining (i), (ii), (1)and (2), it is easy to see:
\[ f(2^n) \leq 4c_0^2 \cdot 2^{\frac{2n}{\alpha}} .\]

That is,
\[ \left\| \|Ax\|_2^2 - 1 \right\|_{2^n} \leq 4c_0^2 \cdot 2^{\frac{2n}{\alpha}} \]
for all \( n \geq 1 \).

Since
\[ \|Ax\|_2^2 - 1 = (\|Ax\|_2 - 1)(\|Ax\|_2 + 1), \]
and
\[ |\|Ax\|_2 - 1| \leq \|Ax\|_2 + 1 ,\]
it follows that
\[ |\|Ax\|_2 - 1|^2 \leq |\|Ax\|_2^2 - 1 |.\]

Thus
\[ \left( \textsf{E} \left| \|Ax\|_2 - 1 \right|^{2^{n+1}} \right)^{\frac{1}{2^n}} \leq 4c_0^2 \cdot 2^{\frac{2n}{\alpha}} \]
\[ \Rightarrow \left\| \|Ax\|_2 - 1 \right\|_{2^{n+1}} \leq 2c_0 \cdot 2^{\frac{n}{\alpha}} \leq 2c_0 \cdot 2^{\frac{n+1}{\alpha}} \]
for all \( n \geq 1 \).
Therefore, for all \( p \geq 1 \), we define \( n_p := \min \{ n : 2^n \geq 4p \} \).

Then we have 
\[ \left\| \|Ax\|_2 - 1 \right\|_p \leq \left\| \|Ax\|_2 - 1 \right\|_{2^{n_p}} \leq 2c_0 \cdot 2^{\frac{n_p}{\alpha}} \]
\[ \leq 2c_0 \cdot (8p)^{\frac{1}{\alpha}} . \]
That is,
\[ \left\| \|Ax\|_2 - 1 \right\|_{p} \leq C(\alpha) K p^{\frac{1}{\alpha}}. \]
(Recall \( c_0 = CK \))

Using property (f) in Proposition~\ref{Lem_2.1}, we have
\[
\|\|Ax\|_2-1\|_{\psi_\alpha}\leq C(\alpha) K.
\]
\end{proof}

Somewhat surprisingly, the analysis of this special case already suffices to establish the general result:
\begin{mytheo}\label{thm:alpha_increase}
Assume \( Z_x = \|Ax\|_2 - \|x\|_2 \), then \( \|Z_x - Z_y\|_{\psi_\alpha} \leq C(\alpha) K \|x - y\|_2 \) for all \( x, y \in \mathbb{R}^n \).
\end{mytheo}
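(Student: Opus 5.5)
The plan is to reduce to unit vectors, as in Step~2 of the proof of Lemma~\ref{lem1}, and then control $\|Ax\|_2-\|Ay\|_2$ for $x,y\in S^{n-1}$. I would do this by a tail-splitting argument of the type used in Step~1 of that lemma. The only genuinely probabilistic inputs are Lemma~\ref{lem:Ax-1} and the decoupling/moment argument from its proof.

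\emph{Reduction to the sphere.} The process is positively homogeneous: $Z_{\lambda x}=\lambda Z_x$ for $\lambda\ge0$, and $Z_0=0$. For a single point, Lemma~\ref{lem:Ax-1} applied to $x/\|x\|_2$ gives $\|Z_x\|_{\psi_\alpha}\le C(\alpha)K\|x\|_2$. For general $x,y$, rescale so that $\|x\|_2=1\le\|y\|_2$ and set $\bar y=y/\|y\|_2$. Then
\[
Z_y-Z_{\bar y}=(\|y\|_2-1)Z_{\bar y}, \qquad \|y\|_2-1=\|y-\bar y\|_2\le\|x-y\|_2,
\]
so this piece is bounded by $C(\alpha)K\|x-y\|_2$ via Lemma~\ref{lem:Ax-1}. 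Since $\|x-\bar y\|_2\le\|x-y\|_2$, Proposition~\ref{Lem_2.1}(i) reduces everything to $x,y\in S^{n-1}$.

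\emph{Unit vectors.} Now $Z_x-Z_y=\|Ax\|_2-\|Ay\|_2$. Put $u=(x-y)/\|x-y\|_2$ and $v=x+y$. I would bound
\[
p(s)=\textsf{P}\bigl(|\|Ax\|_2-\|Ay\|_2|\ge s\|x-y\|_2\bigr)
\]
by $2\exp(-cs^\alpha/K^\alpha)$, splitting into three regimes.
\begin{itemize}
\item For $s\ge 2$, the triangle inequality gives $p(s)\le\textsf{P}(\|Au\|_2-1\ge s/2)$, which Lemma~\ref{lem:Ax-1} handles.
\item For $s<2$, first split off the event $\{\|Ax\|_2\le 1/2\}$. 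By Lemma~\ref{lem:Ax-1} it has probability at most $2\exp(-c/K^\alpha)\le 2\exp(-c's^\alpha/K^\alpha)$, because $s$ is bounded. This step is what replaces any need for $K\gtrsim1$.
\item On the complement, use
\[
|\|Ax\|_2-\|Ay\|_2|\le 2\,|\|Ax\|_2^2-\|Ay\|_2^2| = 2\|x-y\|_2\,|\langle Au,Av\rangle|.
\]
\end{itemize}

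\emph{The bilinear term.} Since $\|A_i\|_2=1$, the diagonal part of $\langle Au,Av\rangle=\sum_{i,j}u_iv_j\langle A_i,A_j\rangle$ equals $\langle u,v\rangle=0$, so only the off-diagonal chaos remains. By decoupling (as in the proof of Lemma~\ref{lem:Ax-1}), its $L^p$-norm is at most $4\|\langle Au,A'v\rangle\|_p$. Conditioning on $A'$, the conditional $\psi_\alpha$-norm is at most $K\|A'v\|_2$. This gives
\[
\|\langle Au,Av\rangle\|_p\le CKp^{1/\alpha}\,\bigl\|\|A'v\|_2\bigr\|_p,
\]
and $\bigl\|\|A'v\|_2\bigr\|_p\le\|v\|_2\bigl(1+C(\alpha)Kp^{1/\alpha}\bigr)\le 2+C(\alpha)Kp^{1/\alpha}$ by Lemma~\ref{lem:Ax-1}.

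\emph{Main obstacle.} The moment bound above contains the product term $K^2p^{2/\alpha}$, which only yields $\psi_{\alpha/2}$ tails for $\langle Au,Av\rangle$. This is where I expect the difficulty. My first attempt is to exploit that only $s<2$ matters in this regime. For $s<2$ we have $s^\alpha\lesssim s^{\alpha/2}$, so a $\psi_{\alpha/2}$-type tail $\exp\bigl(-c(s/K)^{\alpha/2}\bigr)$ dominates $\exp(-cs^\alpha/K^\alpha)$ when $K\le1$. When $K\ge1$ the bound is trivial, after adjusting constants.

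If that bookkeeping fails, the fallback is to bound $\|A'v\|_2$ on a second good event $\{\|A'v\|_2\le 4\}$, whose complement again has probability $\le 2\exp(-c/K^\alpha)$. On that event the conditional tail is a genuine $\psi_\alpha$ tail with parameter $4K$.

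\emph{Conclusion.} Combining the three regimes gives $p(s)\le C\exp(-cs^\alpha/K^\alpha)$. Proposition~\ref{Lem_2.1}(c) then yields $\|Z_x-Z_y\|_{\psi_\alpha}\le C(\alpha)K\|x-y\|_2$.
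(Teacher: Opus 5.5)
Your overall route is sound, and it is genuinely different from the paper's. The paper neither reduces to the sphere nor splits the tail. It uses the quasi-triangle inequality, bounds $|\|Ax\|_2-\|Ay\|_2|\le\|A(x-y)\|_2$ at every deviation level, and applies Lemma~\ref{lem:Ax-1} once. This yields $\|Z_x-Z_y\|_{\psi_\alpha}\le C(\alpha)(K+1)\|x-y\|_2$, and the additive constant is then absorbed by assuming $K\ge1$ ``by homogeneity''. That reduction is not really available, because $\|A_i\|_2=1$ fixes the scale. In fact this normalization forces $|\langle A_i,\theta\rangle|\le1$, so the sharp parameter satisfies $K\le(\ln2)^{-1/\alpha}$. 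The substance of the statement is therefore the regime $K\ll1$ (e.g.\ columns uniform on $S^{m-1}$, where $K\asymp m^{-1/2}$).

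Your argument is modeled on Step~1 of Lemma~\ref{lem1} and exploits two cancellations: the diagonal vanishes because $\|A_i\|_2=1$, and $\langle u,v\rangle=0$. It is what gives a bound genuinely linear in $K$ in that regime. The paper's argument is far shorter but only covers $K\gtrsim1$. Note that you then need Lemma~\ref{lem:Ax-1} for small $K$, and the paper's proof of that lemma also assumes $K$ large. The recursion (i) there, run without that assumption, does give $\|\|Ax\|_2^2-1\|_p\lesssim_\alpha Kp^{1/\alpha}+K^2p^{2/\alpha}$. The lemma for all $K$ then follows from $|\|Ax\|_2-1|\le\min\{|\|Ax\|_2^2-1|,|\|Ax\|_2^2-1|^{1/2}\}$.

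However, the step you flag as the main obstacle is mis-executed as written. The product term $K^2p^{2/\alpha}$ produces the tail $\exp(-c(t/K^2)^{\alpha/2})=\exp(-ct^{\alpha/2}/K^{\alpha})$, not $\exp(-c(t/K)^{\alpha/2})$. With your form, the claimed domination for $K\le1$ is false. It would require $(s/K)^{\alpha/2}\gtrsim(s/K)^{\alpha}$, i.e.\ $s\lesssim K$, which fails e.g.\ for $s=1$ and $K$ small. Replacing $K^2$ by $K$ discards exactly the power of $K$ you need.

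Instead, keep both terms of $\|\langle Au,Av\rangle\|_p\lesssim_\alpha Kp^{1/\alpha}+K^2p^{2/\alpha}$ and apply Markov's inequality with $p\asymp_\alpha\min\{(t/K)^\alpha,(t/K^2)^{\alpha/2}\}$. This gives
\[
\textsf{P}\bigl(|\langle Au,Av\rangle|\ge s/2\bigr)\le C\exp\Bigl(-\frac{c(\alpha)}{K^\alpha}\min\{s^\alpha,s^{\alpha/2}\}\Bigr)\le C\exp\bigl(-c'(\alpha)s^\alpha/K^\alpha\bigr)\qquad(0<s<2)
\]
for every $K>0$. So your observation $s^\alpha\lesssim s^{\alpha/2}$ on $s<2$ is exactly the right one, and neither the case split $K\le1$ versus $K\ge1$ nor the fallback is needed.

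The fallback would not work as stated anyway. Decoupling compares $\textsf{E}F(\cdot)$ for convex $F$, so restricting to $\{\|A'v\|_2\le4\}$ on the decoupled copy does not transfer back to $\langle Au,Av\rangle$ without a separate tail-decoupling inequality. Its moment-level version (only $p\lesssim(s/K)^\alpha$ matter, and there $\bigl\|\|A'v\|_2\bigr\|_p\lesssim_\alpha 1$) is just the corrected bookkeeping above.
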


\begin{proof}
Due to the homogeneity, we can assume \( K \geq 1 .\) We have
\begin{align*}
    \left\| \frac{Z_x - Z_y}{\|x - y\|_2} \right\|_{\psi_\alpha}
     &= \left\| \frac{(\|Ax\|_2 - \|Ay\|_2) - (\|x\|_2 - \|y\|_2)}{\|x - y\|_2} \right\|_{\psi_\alpha}
    \\&\leq C(\alpha)\left(\left\| \frac{\|Ax\|_2 - \|Ay\|_2}{\|x - y\|_2} \right\|_{\psi_\alpha} + \left\| \frac{\|x\|_2 - \|y\|_2}{\|x - y\|_2} \right\|_{\psi_\alpha}\right) .
\end{align*}

By the triangle inequality, we have:
\[ \left| \|Ax\|_2 - \|Ay\|_2 \right| \leq \|A(x-y)\|_2 ,\]
\[ \left| \|x\|_2 - \|y\|_2 \right| \leq \|x-y\|_2 .\]

Therefore:
\begin{align*}
\left\| \frac{Z_x - Z_y}{\|x -y\|_2}\right\|_{\psi_\alpha}&\leq C(\alpha)\left(\left\| \frac{\|A(x-y)\|_2}{\|x-y\|_2} \right\|_{\psi_\alpha} + 1 \right)\quad \\
 &\leq C(\alpha)\left(\left\| \frac{\|A(x-y)\|_2 - \|x-y\|_2}{\|x-y\|_2} \right\|_{\psi_\alpha} + 2 \right)\leq C(\alpha)K. \text{ (for } K \geq 1)
\end{align*}

\end{proof}

The remainder of the proof follows the same lines as that of Theorem~\ref{thm:row} and is therefore omitted.

\section{Applications}

\subsection{Johnson-Lindenstrauss Lemma}

A direct consequence of our results is that all matrices satisfying the assumptions of
Theorem~\ref{thm:row} or Theorem~\ref{thm:column} serve as Johnson--Lindenstrauss embeddings for dimension reduction.
We record the corresponding Johnson--Lindenstrauss lemma below.

\begin{mylem}
    Let $A\in \mathbb{R}^{m\times n}$ satisfy the conditions in Theorem \ref{thm:row}. For any $\delta>0$ and $0\le \varepsilon\le 1$, if 
    \[
    m\ge C(\alpha)K^{\frac{8}{\alpha}}\varepsilon^{-2}(\log \frac{1}{\delta})^{\frac{2}{\alpha}},
    \]
    then for any $x,y\in \mathbb{R}^n$, with probability at least $1-\delta$, we have
    \[
    (1-\varepsilon)\|x-y\|_2\le \frac{1}{\sqrt{m}}\|A(x-y)\|_2\le (1+\varepsilon)\|x-y\|_2.
    \]
\end{mylem}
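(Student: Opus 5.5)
Since the conclusion concerns a single fixed pair $x,y$, no chaining is needed. Set $w:=x-y$; if $w=0$ the claim is trivial. By homogeneity of both $\|Aw\|_2$ and $\|w\|_2$ we may take $w\in S^{n-1}$. The statement then reduces to a one-point deviation bound for $\|Aw\|_2-\sqrt m$, which we read off from Corollary~\ref{coro:row} applied to the singleton $T=\{w\}$.

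\textbf{Step 1: the one-point tail bound.} For $T=\{w\}$ we have $\gamma_\alpha(T)=0$, since a one-point set has diameter zero, and $rad(T)=1$. The high-probability part of Corollary~\ref{coro:row} then gives, for every $u\ge 0$,
\[
\textsf{P}\Bigl(\bigl|\|Aw\|_2-\sqrt m\bigr|\ge C(\alpha)K^{4/\alpha}u\Bigr)\le C e^{-u^\alpha}.
\]
Alternatively, we can argue directly. By the proof of Lemma~\ref{lem1} with $B=I_m$ (so $\|B\|_{op}=1$ and $\|B\|_{HS}=\sqrt m$), we get $\|Z_w\|_{\psi_\alpha}=\|Z_w-Z_0\|_{\psi_\alpha}\le C(\alpha)K^{4/\alpha}$. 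Proposition~\ref{Lem_2.1}(b) then yields $\textsf{P}(|Z_w|\ge t)\le 2\exp\bigl(-t^\alpha/(C(\alpha)K^{4/\alpha})^\alpha\bigr)$.

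\textbf{Step 2: choice of parameters.} Set $u:=(\log(C/\delta))^{1/\alpha}$, so that the failure probability is at most $\delta$. We want the deviation to be at most $\varepsilon\sqrt m$, which requires
\[
C(\alpha)K^{4/\alpha}(\log(C/\delta))^{1/\alpha}\le\varepsilon\sqrt m .
\]
Squaring, this is exactly $m\ge C(\alpha)K^{8/\alpha}\varepsilon^{-2}(\log(C/\delta))^{2/\alpha}$. On this event, dividing by $\sqrt m$ gives $(1-\varepsilon)\le \|Aw\|_2/\sqrt m\le 1+\varepsilon$. Multiplying back by $\|x-y\|_2$ gives the claim.

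\textbf{Step 3: the constants (main obstacle).} The only delicate point is replacing $\log(C/\delta)$ by $\log(1/\delta)$ in the stated hypothesis. When $\delta\le 1/C$, say, we have $\log(C/\delta)\le 2\log(1/\delta)$, and the extra factor is absorbed into $C(\alpha)$. For $\delta$ bounded away from $0$ but with $\log(1/\delta)$ small, I would handle the remaining range as follows: by enlarging the absolute constant $C$ in the tail bound we may assume $C\le e$. The event then holds with probability at least $1-\delta$ as soon as $u^\alpha\ge \log(C/\delta)$. Since $\log(C/\delta)\le 1+\log(1/\delta)$, we need a lower bound on $m$ of the form $m\gtrsim K^{8/\alpha}\varepsilon^{-2}$ that does not depend on $\delta$. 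I would check whether this follows from the stated hypothesis in the intended range, presumably $\delta<1/e$, or otherwise note that it must be implicitly assumed. If $\delta\ge 1$ the statement is vacuous.
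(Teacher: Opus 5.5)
This is essentially the paper's proof. The paper normalizes $\|x-y\|_2=1$ and takes $B=I_m$ in the one-point bound from Lemma~\ref{lem1} (using $Z_0=0$) to get $\bigl\|\, \|A(x-y)\|_2/\sqrt m-1 \bigr\|_{\psi_\alpha}\le C(\alpha)K^{4/\alpha}m^{-1/2}$. It then concludes with Proposition~\ref{Lem_2.1}(b). That is exactly your ``direct'' route in Step~1 followed by your Step~2.

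Your Step~3 worry is genuine, and the paper's ``immediately'' skips it. The prefactor in Proposition~\ref{Lem_2.1}(b) is already $2$; ``enlarging'' $C$ would go the wrong way. The failure probability is at most $\delta$ once $m\ge C(\alpha)K^{8/\alpha}\varepsilon^{-2}(\log(2/\delta))^{2/\alpha}$. This follows from the stated hypothesis only for $\delta$ bounded away from $1$ (say $\delta\le 1/2$). The restriction cannot be derived from the hypothesis. For $\delta$ close enough to $1$ the hypothesis allows $m=1$. Then Rademacher rows, $x-y=(e_1+e_2)/\sqrt2$ and $\varepsilon=1/4$ give $\|A(x-y)\|_2\in\{0,\sqrt2\}$, so the event has probability $0<1-\delta$.
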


\begin{proof}
    By scaling, we can assume $\|x-y\|_2=1$. By Theorem ~\ref{maj} with $B=I_m$, we have
    \[
    \|\frac{1}{\sqrt{m}}\|A(x-y)\|_2-\|x-y\|_2\|_{\psi_\alpha}\le C(\alpha)K^{4/\alpha}m^{-1/2}.
    \]
    Combining with the property (b) in Proposition \ref{Lem_2.1}, we can get the result immediately.
\end{proof}

Similarly, we have the following lemma.

\begin{mylem}
    Let $A\in \mathbb{R}^{m\times n}$ satisfy the conditions in Corollary \ref{coro:column} with $\lambda=\sqrt{m}$. For any $\delta>0$ and $0\le \varepsilon\le 1$, if 
    \[
    m\ge C(\alpha)K^{2}\varepsilon^{-2}(\log \frac{1}{\delta})^{\frac{2}{\alpha}},
    \]
    then for any $x,y\in \mathbb{R}^n$, with probability at least $1-\delta$, we have
    \[
    (1-\varepsilon)\|x-y\|_2\le \frac{1}{\sqrt{m}}\|A(x-y)\|_2\le (1+\varepsilon)\|x-y\|_2.
    \]
\end{mylem}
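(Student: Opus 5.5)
We will argue exactly as in the preceding Johnson--Lindenstrauss lemma for the row-wise model, replacing Lemma~\ref{lem1} by the column-wise bound of Lemma~\ref{lem:Ax-1}. By homogeneity (both sides of the desired two-sided inequality scale linearly in $\|x-y\|_2$), we may assume $\|x-y\|_2=1$ and set $z:=x-y\in S^{n-1}$. The hypothesis of Corollary~\ref{coro:column} with $\lambda=\sqrt m$ says the columns $A_i$ satisfy $\|A_i\|_2=\sqrt m$ almost surely and have $\psi_\alpha$-norm at most $K$. We rescale: $\tilde A:=A/\sqrt m$ has independent mean-zero columns with $\|\tilde A_i\|_2=1$ and $\|\langle \tilde A_i,w\rangle\|_{\psi_\alpha}\le K/\sqrt m$ for all $w\in S^{m-1}$.

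Next we apply Lemma~\ref{lem:Ax-1} to $\tilde A$ with parameter $\tilde K:=K/\sqrt m$. This gives
\[
\Bigl\|\tfrac{1}{\sqrt m}\|Az\|_2-1\Bigr\|_{\psi_\alpha}\le C(\alpha)\tfrac{K}{\sqrt m}.
\]
One subtlety: the proof of Lemma~\ref{lem:Ax-1} assumed ``$K$ large enough'' (in order to have $c_0\ge1$). For $\tilde K$ small we cannot simply enlarge it, since we want the bound to scale like $\tilde K$. I would handle this by rerunning the recursion $f(p)\le C\tilde K p^{1/\alpha}\sqrt{f(p/2)+1}$ without the assumption $c_0\ge 1$. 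If $c_0\le1$ the sequence still satisfies $a_n\lesssim c_0 2^{C n/\alpha}$, because the $+1$ inside the square root dominates and yields linear rather than quadratic dependence on $c_0$. Equivalently, one can bound $\|\,\|\tilde Az\|_2-1\|_p\le \|\,\|\tilde Az\|_2^2-1\|_p$ directly. I expect this small-parameter regime to be the main obstacle, and in fact the only genuinely new point. If that route becomes messy, I would first try the alternative: use Corollary~\ref{coro:column} with $T=\{z\}$, where $\gamma_\alpha(T)=0$ and $rad(T)=1$, which gives tail $|\|Az\|_2-\sqrt m|\le C(\alpha)Ku$ with probability $1-Ce^{-u^\alpha}$, and then divide by $\sqrt m$.

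Finally, we apply property (b) of Proposition~\ref{Lem_2.1} (or the tail form just described):
\[
\textsf{P}\Bigl(\bigl|\tfrac{1}{\sqrt m}\|Az\|_2-1\bigr|\ge\varepsilon\Bigr)\le C\exp\bigl(-(c(\alpha)\varepsilon\sqrt m/K)^\alpha\bigr).
\]
This is at most $\delta$ as soon as $\varepsilon\sqrt m/K\ge C(\alpha)(\log(C/\delta))^{1/\alpha}$, that is, $m\ge C(\alpha)K^2\varepsilon^{-2}(\log\frac1\delta)^{2/\alpha}$, after absorbing the constant $C$ into $C(\alpha)$ (for $\delta$ not close to $1$; the case of large $\delta$ is trivial). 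On the complementary event we have $(1-\varepsilon)\le\frac1{\sqrt m}\|Az\|_2\le(1+\varepsilon)$. Undoing the normalization gives the claim.
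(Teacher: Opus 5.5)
Your route is the one the paper intends. Its proof is only ``similarly'': bound $\bigl\|\frac{1}{\sqrt m}\|Az\|_2-1\bigr\|_{\psi_\alpha}$ by $C(\alpha)K/\sqrt m$, then apply Proposition~\ref{Lem_2.1}(b). Your final computation of the required $m$ is correct.

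You are also right that the small-parameter case of Lemma~\ref{lem:Ax-1} is the crux, and it cannot be avoided. Since $\|A_i\|_2=\sqrt m$, some $j$ has $\textsf{E}\langle A_i,e_j\rangle^2\ge 1$, so $K\gtrsim_\alpha 1$ and $\tilde K=K/\sqrt m$ is typically of order $m^{-1/2}$.

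Your fallback does not bypass this. Corollary~\ref{coro:column} ``follows immediately'' from Theorem~\ref{thm:column} only through the rescaling $A\mapsto A/\lambda$. That is, it invokes Lemma~\ref{lem:Ax-1} and Theorem~\ref{thm:alpha_increase} with parameter $K/\sqrt m$, and their proofs assume $K$ large (resp.\ $K\ge 1$). The additive ``$+2$'' in the proof of Theorem~\ref{thm:alpha_increase} is another step that does not scale with $K$. So citing the corollary only relocates the gap. For this single-point statement you should go directly through Lemma~\ref{lem:Ax-1}.

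The repair you sketch does not work as stated. For $c_0<1$ the recursion does give $f(2^n)\lesssim c_0 2^{2n/\alpha}$, but two problems follow:
\begin{itemize}
\item Using $\|\,\|\tilde Az\|_2-1\|_p\le f(p)$ then gives only a $\psi_{\alpha/2}$ bound, which leads to $(\log\frac1\delta)^{4/\alpha}$.
\item The paper's step $\|\,\|\tilde Az\|_2-1\|_{2p}\le f(p)^{1/2}$ gives only $\sqrt{c_0}$ dependence.
\end{itemize}
The heuristic that the ``$+1$ dominates'' holds only while $c_0p^{1/\alpha}\lesssim 1$. Beyond that, $f(p)$ really does grow like $c_0^2p^{2/\alpha}$: for spread-out $z$, $\|\tilde Az\|_2^2$ behaves like $\chi^2_m/m$, whose $L^p$-deviation is $\asymp\sqrt{p/m}+p/m$.

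The correct fix uses both inequalities, in different ranges of $p$. Set $t=c_0p^{1/\alpha}$ and note $t\sqrt{3(t+t^2)+1}\le 3(t+t^2)$ for all $t\ge0$. By induction over $p=2^n$ this gives $f(p)\le 3\bigl(c_0p^{1/\alpha}+c_0^2p^{2/\alpha}\bigr)$. Then $\|\,\|\tilde Az\|_2-1\|_p\le\min\{f(p),f(p/2)^{1/2}\}\lesssim c_0p^{1/\alpha}$, using the first term when $c_0p^{1/\alpha}\le1$ and the second otherwise. This yields $\bigl\|\frac{1}{\sqrt m}\|Az\|_2-1\bigr\|_{\psi_\alpha}\le C(\alpha)K/\sqrt m$, and your last paragraph then goes through.

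Finally, your remark that ``the case of large $\delta$ is trivial'' is false. Take $m=1$, $n=2$, Rademacher columns and $z=(1,1)/\sqrt2$. Then $\|Az\|_2\in\{0,\sqrt2\}$, so for $\varepsilon=0.1$ the good event has probability $0$. Yet the hypothesis on $m$ holds once $\delta$ is close enough to $1$. The statement, like its row-wise twin, needs $\delta\le\tfrac12$, or $\log\frac{2}{\delta}$ in place of $\log\frac1\delta$. With that restriction, your absorption of constants is fine.
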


\subsection{R.I.P. of $\alpha$-subexponential random matrices}

A vector $x \in \mathbb{R}^n$ is said to be $s$-sparse if it has at most $s$ nonzero entries, that is,
\begin{equation}
\|x\|_0 := \bigl|\{\, l : x_l \neq 0 \,\}\bigr| \le s .
\end{equation}
In compressed sensing, one seeks to reconstruct such a vector from linear observations of the form
\begin{equation}
y = \Phi x ,
\end{equation}
where $\Phi \in \mathbb{R}^{m \times n}$ is a given sensing matrix. 
Since the system is fully determined only when $m=n$, the problem is of interest primarily in the undersampled regime $m \ll n$.

A widely studied reconstruction method is based on $\ell_1$-minimization, namely,
\begin{equation}
\min_{z \in \mathbb{R}^n} \|z\|_1
\quad \text{subject to} \quad
\Phi z = y .
\end{equation}
This naturally raises the question of when the solution to this convex program coincides with the original sparse signal.

A fundamental sufficient condition is the restricted isometry property (R.I.P.). 
An $m \times n$ matrix $A$ is said to satisfy the R.I.P. of order $s$ with constant $\delta \in (0,1)$ if
\begin{equation}\label{eq:R.I.P.}
(1-\delta)\|x\|_2^2 \le \|Ax\|_2^2 \le (1+\delta)\|x\|_2^2
\end{equation}
holds for all $s$-sparse vectors $x \in \mathbb{R}^n$.
The restricted isometry constant $\delta_s$ is defined as the smallest $\delta$ for which \eqref{eq:R.I.P.} holds.

Dai et al.~\cite{dai2025log} studied the restricted isometry property for random matrices with $\alpha$-subexponential tails.
By contrast, the conclusions obtained here concentrate on particular instances arising from our two main theorems.

\begin{myprop}
    Let $A\in \mathbb{R}^{m\times n}$ satisfy the conditions in Theorem \ref{thm:row}. For any $\delta\in (0,1)$, if 
    \[
    m\ge C(\alpha)\delta^{-2}K^{\frac{8}{\alpha}}\Bigl((s\log\frac{en}{s})^\frac{1}{\alpha}+u\Bigr)^{2},
    \]
    then with probability at least $1-C\exp(-u^\alpha)$, we have
    \[
    \delta_s\le \delta.
    \]
\end{myprop}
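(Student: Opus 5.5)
Our plan is to apply Corollary~\ref{coro:row} to the set $T_s:=\{x\in\mathbb{R}^n:\ \|x\|_0\le s,\ \|x\|_2=1\}$. We then convert the resulting additive deviation bound for $\|Ax\|_2$ into the multiplicative bound \eqref{eq:R.I.P.} for the normalized matrix $\frac{1}{\sqrt m}A$. (As is implicit in the statement, $\delta_s$ refers to $\frac{1}{\sqrt m}A$, since $\textsf{E}\|Ax\|_2^2=m\|x\|_2^2$.) Here $rad(T_s)=1$. Corollary~\ref{coro:row} gives that, with probability at least $1-Ce^{-u^\alpha}$,
\[
\sup_{x\in T_s}\Bigl|\tfrac{1}{\sqrt m}\|Ax\|_2-1\Bigr|\le \frac{C(\alpha)K^{4/\alpha}\bigl(\gamma_\alpha(T_s)+u\bigr)}{\sqrt m}=:\varepsilon .
\]

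The key geometric input is the estimate $\gamma_\alpha(T_s)\le C(\alpha)\,(s\log\frac{en}{s})^{1/\alpha}$. We prove it by Dudley-type chaining. An admissible sequence built from $\epsilon$-nets gives
\[
\gamma_\alpha(T_s)\le C(\alpha)\int_0^{1}\bigl(\log N(T_s,\epsilon)\bigr)^{1/\alpha}\,d\epsilon .
\]
Concretely, choose $\mathcal A_k$ from a net at scale $e_k=\inf\{\epsilon: N(T_s,\epsilon)\le 2^{2^k}\}$ and sum $2^{k/\alpha}e_k$. The terms $2^{k/\alpha}$ grow geometrically, which makes the comparison with the entropy integral standard for every $\alpha>0$.

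Next, $T_s$ is a union of $\binom ns\le (en/s)^s$ unit spheres of dimension $s$. Hence
\[
\log N(T_s,\epsilon)\le s\log\frac{en}{s}+s\log\frac{3}{\epsilon}.
\]
Using $(a+b)^{1/\alpha}\le C(\alpha)(a^{1/\alpha}+b^{1/\alpha})$, the integral is bounded by
\[
C(\alpha)\Bigl((s\log\tfrac{en}{s})^{1/\alpha}+s^{1/\alpha}\textstyle\int_0^1\log^{1/\alpha}(3/\epsilon)\,d\epsilon\Bigr)\le C(\alpha)(s\log\tfrac{en}{s})^{1/\alpha}.
\]
The last step uses $\log(en/s)\ge1$ and the finiteness of the integral. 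This entropy-to-$\gamma_\alpha$ step is the only real work. We expect the main obstacle to be justifying the Dudley bound for general $\alpha$ with the paper's normalization of admissible sequences; we would write out the net-based partition explicitly.

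Finally, the assumption on $m$, with a large enough constant $C(\alpha)$, gives $\varepsilon\le\delta/3$. Then for every $s$-sparse unit vector $x$,
\[
\tfrac1m\|Ax\|_2^2\in[(1-\varepsilon)^2,(1+\varepsilon)^2]\subset[1-3\varepsilon,1+3\varepsilon]\subset[1-\delta,1+\delta],
\]
since $\varepsilon\le1$. By homogeneity this extends to all $s$-sparse $x$, so $\delta_s\le\delta$ holds on the stated event.
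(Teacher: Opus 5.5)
Your proposal is correct and follows essentially the same route as the paper. You apply Corollary~\ref{coro:row} to the $s$-sparse unit vectors, bound $\gamma_\alpha$ by the Dudley-type entropy integral combined with the volumetric covering bound to get $C(\alpha)(s\log\frac{en}{s})^{1/\alpha}$, and substitute. You are also somewhat more careful than the paper: you make explicit that $\delta_s$ refers to $\tfrac{1}{\sqrt m}A$, and you spell out the passage from $|\tfrac{1}{\sqrt m}\|Ax\|_2-1|\le\delta/3$ to the squared RIP inequality, which the paper leaves implicit.
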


\begin{proof}
    By homogeneity, it suffices to verify \eqref{eq:R.I.P.} for all vectors
\[
x \in T := \Sigma_{s}^{2n} \cap \mathbb{S}^{n-1},
\]
where $\Sigma_{s}^{2n}$ denotes the set of $s$-sparse vectors in $\mathbb{R}^{2n}$.
Combining this observation with Corollary~\ref{coro:row}, we obtain that, with probability at least
$1 - C e^{-u^\alpha}$,
\begin{equation}\label{eq:RIP_row}
\sup_{x \in T}
\left| \frac{1}{\sqrt{m}}\|Ax\|_2 - 1 \right|
\le
C(\alpha)\, m^{-1/2} K^{4/\alpha}
\bigl( \gamma_\alpha(T) + u\, \operatorname{rad}(T) \bigr).
\end{equation}
Consequently, it remains to estimate the $\gamma_\alpha$-functional of the set $T$.

A general upper bound for $\gamma_\alpha(T)$ is given by
\[
\gamma_\alpha(T)
\lesssim
\int_0^\infty \bigl( \log N(T,d,u) \bigr)^{1/\alpha} \, du,
\]
see, for instance, Chapter~3 of Talagrand~\cite{Talagrand2014} (where the case $\alpha=2$ is treated).
On the other hand, a standard volumetric argument yields
\[
N\bigl(T, \|\cdot\|_2, u\bigr)
\le
\Bigl(\frac{e\, 2n}{s}\Bigr)^s (1+2/u)^s .
\]
Combining these estimates, we obtain
\[
\gamma_\alpha(T)
\le
C(\alpha) s^{1/\alpha}
\left(
\log^{1/\alpha}\!\Bigl(\frac{e n}{s}\Bigr)
+ \int_0^1 \log^{1/\alpha}\!\Bigl(1+\frac{2}{u}\Bigr)\, du
\right)
\le
C(\alpha) \bigl( s \log (e n / s) \bigr)^{1/\alpha}.
\]
Substituting this bound into \eqref{eq:RIP_row} yields the desired conclusion.

\end{proof}

Similarly, we have the following proposition.

\begin{myprop}
    Let $A\in \mathbb{R}^{m\times n}$ satisfy the conditions in Theorem \ref{thm:column}. Then for any $\delta\in (0,1)$, if 
    \[
    m\ge C(\alpha)\delta^{-2}K^{2}\Bigl((s\log\frac{en}{s})^\frac{1}{\alpha}+u\Bigr)^{2},
    \]
    then with probability at least $1-C\exp(-u^\alpha)$, we have
    \[
    \delta_s\le \delta.
    \]
\end{myprop}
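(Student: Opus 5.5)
This is the column-wise analogue of the preceding proposition, so the plan is to mirror its proof with Theorem~\ref{thm:column} (in the scaled form of Corollary~\ref{coro:column}) in place of Corollary~\ref{coro:row}. Here the natural normalization is $\|A_i\|_2=1$, so the quantity to control is $\|Ax\|_2$ itself, not $m^{-1/2}\|Ax\|_2$. The sample size $m$ must enter through the $\psi_\alpha$ parameter of the columns. Columns of unit norm in $\mathbb{R}^m$ that are, say, isotropic up to scaling have $\psi_\alpha$ norm of order $K/\sqrt{m}$. I will therefore read $K$ in the statement as the normalized parameter, i.e. $\|\langle A_i,z\rangle\|_{\psi_\alpha}\le K m^{-1/2}$ for $z\in S^{m-1}$. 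This is equivalent to applying Corollary~\ref{coro:column} with $\lambda=\sqrt m$ to $\sqrt m A$, exactly as in the column-wise Johnson--Lindenstrauss lemma stated above.

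\textbf{Steps.} By homogeneity, \eqref{eq:R.I.P.} only needs checking on $T=\Sigma_s\cap\mathbb{S}^{n-1}$, for which $\operatorname{rad}(T)=1$. Applying Theorem~\ref{thm:column} with parameter $Km^{-1/2}$ gives, with probability at least $1-Ce^{-u^\alpha}$,
\[
\sup_{x\in T}\bigl|\|Ax\|_2-1\bigr|\le C(\alpha)Km^{-1/2}\bigl(\gamma_\alpha(T)+u\bigr).
\]
Next I reuse the estimate from the previous proof, which comes from Dudley's entropy integral with exponent $1/\alpha$ together with the volumetric covering bound for sparse vectors:
\[
\gamma_\alpha(T)\le C(\alpha)\bigl(s\log(en/s)\bigr)^{1/\alpha}.
\]
Under the assumed lower bound on $m$, this yields $\sup_{x\in T}|\|Ax\|_2-1|\le\delta/3$.

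\textbf{Converting to squared norms.} If $|\|Ax\|_2-1|\le\eta\le1$, then
\[
\bigl|\|Ax\|_2^2-1\bigr|=\bigl|\|Ax\|_2-1\bigr|\cdot\bigl(\|Ax\|_2+1\bigr)\le\eta(2+\eta)\le3\eta.
\]
Taking $\eta=\delta/3$ gives $|\|Ax\|_2^2-1|\le\delta$ on $T$, hence $\delta_s\le\delta$, with the constant absorbed into $C(\alpha)$.

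\textbf{Main obstacle.} The delicate point is the normalization. With the literal hypothesis $\|A_i\|_2=1$ and a fixed $K$, the right-hand side of Theorem~\ref{thm:column} does not decay in $m$. The $m$-dependence therefore has to come from the $\psi_\alpha$ parameter scaling like $m^{-1/2}$, or equivalently from Corollary~\ref{coro:column} with $\lambda=\sqrt m$ followed by division by $\sqrt m$. I will make this rescaling explicit at the outset. After that, everything reduces to the $\gamma_\alpha$ estimate already established for the row-wise proposition.
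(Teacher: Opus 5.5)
Your proposal is correct and is precisely the argument the paper intends by ``Similarly''. You apply the column-wise bound (Corollary~\ref{coro:column} with $\lambda=\sqrt m$, equivalently Theorem~\ref{thm:column} with parameter $Km^{-1/2}$) on $T=\Sigma_s\cap\mathbb{S}^{n-1}$, reuse the entropy estimate $\gamma_\alpha(T)\le C(\alpha)(s\log(en/s))^{1/\alpha}$ from the row-wise proposition, and add two steps the paper leaves implicit: the explicit rescaling of $K$ (without which, as you note, unit-norm columns with a fixed $K$ give no $m$-dependence at all) and the passage from $|\|Ax\|_2-1|$ to $|\|Ax\|_2^2-1|$.

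One caveat is inherited from the paper rather than introduced by you. Everything rests on Theorem~\ref{thm:column} in the small-parameter regime $Km^{-1/2}\ll 1$. The paper's proof of that theorem assumes $K\ge 1$ ``due to homogeneity'', which is not available under the constraint $\|A_i\|_2=1$. Because of the additive constant in its final step, that proof only yields $C(\alpha)(K+1)$ instead of $C(\alpha)K$. Your argument is therefore exactly as strong as that cited result.
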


\subsection{Normalizing the columns of an $\alpha$-subexponential matrix}
We turn to the study of random matrices whose columns are independent, isotropic and $\alpha$-subexponential. 

These assumption ensures the Euclidean norms of the columns concentrate around $\sqrt{m}$. 
Motivated by this fact, we rescale each column to lie on the sphere $\sqrt{m}\mathbb{S}^{n-1}$. 
However, in order to prevent excessive amplification caused by columns of unusually small norm, it is necessary to restrict attention to the event on which all column norms are uniformly bounded from below. 
This requirement is formalized by the event $\mathcal{F}$ defined below.

\begin{mycoro}
\label{cor:column-normalization}
Let $A \in \mathbb{R}^{m \times n}$. Assume $\alpha\in(0,2]$. Let the columns of $A$ be independent, symmetric, isotropic and have \(\psi_\alpha\)-norm  (or quasi-norm when $\alpha<1$) bounded by $K$.
Assume that
\[
m\ge C(\alpha) K^4(\log n)^{2/\alpha} .
\]
Denote by $A_i$ the $i$th column of $A$. Define the event
\[
\mathcal{F}
:=
\Bigl\{
\min_{1 \le i \le n} \|A_i\|_2 \ge \tfrac{\sqrt{m}}{2}
\Bigr\}.
\]
Then
\[
\textsf{P}(\mathcal{F})
\ge
1-2\exp\!\Bigl(-c(\alpha)\,\frac{m^{\alpha/2}}{K^{2\alpha}}\Bigr).
\]

Conditioned on the event $\mathcal{F}$, define the column-normalized matrix $\widetilde{A}$ by
\[
\widetilde{A}_i
:=
\frac{\sqrt{m}}{\|A_i\|_2}\, A_i,
\qquad i = 1,2,\dots,n.
\]
Then
\[
\textsf{E}\Bigl[
\sup_{x \in T}
\bigl|
\|\widetilde{A}x\|_2 - \sqrt{m}\|x\|_2
\bigr|
\;\Big|\;
\mathcal{F}
\Bigr]
\le
C(\alpha) K (\gamma_\alpha(T)+rad(T)).
\]
Moreover, still conditional on $\mathcal{F}$, for every $u>0$, with probability at least
$1 - 3 \exp(-u^2)$,
\[
\sup_{x \in T}
\bigl|
\|\widetilde{A}x\|_2 - \sqrt{m}\|x\|_2
\bigr|
\le
C(\alpha) K \bigl( \gamma_\alpha(T) + u \cdot \operatorname{rad}(T) \bigr).
\]
\end{mycoro}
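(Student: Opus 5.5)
We first bound $\textsf{P}(\mathcal{F}^c)$ and then reduce the conditional statements to Corollary~\ref{coro:column}.

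\textbf{Step 1: the probability of $\mathcal{F}$.} Fix $i$. Since $A_i$ is isotropic with $\psi_\alpha$-norm at most $K$, we would like to apply Theorem~\ref{BB} with $B=I_m$. That theorem requires independent coordinates, however, so for a general isotropic column we instead argue through Theorem~\ref{thm:row}. Apply Corollary~\ref{coro:row} (with $n=1$, $T=\{1\}$) to the $m\times 1$ matrix formed from $A_i$; this requires viewing the column entries as the rows. If that is not available, we use the concentration of $\|A_i\|_2^2-m$ obtained from a Hanson--Wright-type bound. In either case we aim for
\[
\textsf{P}\bigl(\bigl|\|A_i\|_2-\sqrt m\bigr|\ge \tfrac{\sqrt m}{2}\bigr)\le 2\exp\bigl(-c(\alpha) m^{\alpha/2}/K^{2\alpha}\bigr),
\]
which is Theorem~\ref{BB} with $t=\sqrt m/(2K^2)$. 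A union bound over $n$ columns multiplies this by $n$. The hypothesis $m\ge C(\alpha)K^4(\log n)^{2/\alpha}$ gives $m^{\alpha/2}/K^{2\alpha}\ge C\log n$, so the factor $n$ is absorbed at the cost of halving $c(\alpha)$. This step is the main obstacle, since it needs a norm-concentration result valid for columns without independent coordinates. If no such result is available, we would add the assumption of independent entries, under which Theorem~\ref{BB} applies directly.

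\textbf{Step 2: the conditional law of $\widetilde A$.} The events $\{\|A_i\|_2\ge\sqrt m/2\}$ are independent across $i$ and $\mathcal{F}$ is their intersection. Hence, conditionally on $\mathcal{F}$, the columns $A_i$ remain independent, and each has the law of $A_i$ conditioned on $\|A_i\|_2\ge\sqrt m/2$. Symmetry of $A_i$ is preserved by this conditioning, because the event depends only on $\|A_i\|_2$. Therefore the normalized columns $\widetilde A_i$ are independent, symmetric (so mean zero), and satisfy $\|\widetilde A_i\|_2=\sqrt m$ almost surely.

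\textbf{Step 3: the $\psi_\alpha$-norm of $\widetilde A_i$.} For $x\in S^{m-1}$ we have, on the conditioning event,
\[
|\langle \widetilde A_i,x\rangle|\le 2|\langle A_i,x\rangle|.
\]
The conditioning can inflate tails by at most a factor $1/\textsf{P}(\|A_i\|_2\ge\sqrt m/2)\le 2$, by Step 1 and the assumption on $m$. Hence, by Proposition~\ref{Lem_2.1}(b),(c), the conditional tails of $\langle \widetilde A_i,x\rangle$ are bounded by $4\exp(-t^\alpha/(2K)^\alpha)$. This yields $\|\widetilde A_i\|_{\psi_\alpha}\le C(\alpha)K$ under the conditional law.

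\textbf{Step 4: conclusion.} Apply Corollary~\ref{coro:column} with $\lambda=\sqrt m$ to $\widetilde A$ under the conditional measure $\textsf{P}(\cdot\mid\mathcal{F})$. This gives the conditional expectation bound $C(\alpha)K(\gamma_\alpha(T)+rad(T))$ and the tail bound with probability $1-Ce^{-u^\alpha}$. The stated form $1-3\exp(-u^2)$ then follows as in Theorem~\ref{maj} by reparameterizing $u$ (with $u^2$ replacing $u^\alpha$ after adjusting constants), or else we simply read the statement with the exponent $u^\alpha$.
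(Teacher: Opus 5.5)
Your proposal follows the paper's proof step for step. Both use Theorem~\ref{BB} with $B=I_m$ and $t=\sqrt m/(2K^2)$ plus a union bound for $\mathcal F$. Both get conditional independence and symmetry because $\mathcal F$ is an intersection of events each depending only on one $\|A_i\|_2$; your argument, that $A_i$ and $-A_i$ have the same law and the event is invariant under the flip, is cleaner than the paper's ``signs independent of magnitudes''. Both bound the conditional $\psi_\alpha$-norm using $\sqrt m/\|A_i\|_2\le 2$ and $\textsf{P}(\|A_i\|_2\ge\sqrt m/2)\ge 1/2$, and both finish with Corollary~\ref{coro:column} at $\lambda=\sqrt m$.

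\textbf{The gap in Step~1.} The genuine gap is exactly where you located it, and your proposed substitutes do not close it. Corollary~\ref{coro:row} applied to the $m\times 1$ matrix with column $A_i$ needs its rows, i.e.\ the entries of $A_i$, to be independent. Theorem~\ref{hason} also needs independent coordinates. The paper's proof invokes Theorem~\ref{BB} without this hypothesis, so it has the same hole.

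The hole cannot be filled under the stated assumptions. Take $A_i=\sqrt2\,b_iX_i$ i.i.d., with $b_i\in\{0,1\}$ Bernoulli$(1/2)$ independent of $X_i$ uniform on $\sqrt m\,S^{m-1}$; this is the paper's own example from the remark on column normalization, rescaled to be isotropic. These columns are independent, symmetric and isotropic, with $\psi_2$-norm, hence $\psi_\alpha$-norm, bounded by an absolute constant. Yet $\textsf{P}(\mathcal F)=2^{-n}$, which contradicts $\textsf{P}(\mathcal F)\ge 1-2\exp(-c(\alpha)m^{\alpha/2}/K^{2\alpha})$ for fixed $n$ and large $m$.

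So your last fallback is not optional. You must assume the coordinates of each column are independent; they then have unit variance by isotropy and $\psi_\alpha$-norm at most $K$. With that hypothesis, Theorem~\ref{BB} and your union-bound computation using $m\ge C(\alpha)K^4(\log n)^{2/\alpha}$ go through verbatim.

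\textbf{Keeping only isotropy.} If you want to avoid the extra hypothesis, note that
$\textsf{E}\|A_i\|_2^4\le\bigl(\sum_j\|\langle A_i,e_j\rangle\|_{L^4}^2\bigr)^2\le C(\alpha)m^2K^4$. Paley--Zygmund then gives $\textsf{P}(\|A_i\|_2\ge\sqrt m/2)\ge c(\alpha)K^{-4}$. Your Step~3 then yields a conditional $\psi_\alpha$-norm of order $C(\alpha)K(\log(2+K))^{1/\alpha}$. This rescues the two conditional bounds with that extra logarithmic factor, but not the lower bound on $\textsf{P}(\mathcal F)$.

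\textbf{The reparameterization in Step~4.} This step is wrong. Substituting $u\mapsto u^{2/\alpha}$ turns $e^{-u^\alpha}$ into $e^{-u^2}$, but it also turns $u\cdot\operatorname{rad}(T)$ into $u^{2/\alpha}\operatorname{rad}(T)$. So for $\alpha<2$ you cannot reach the stated form this way; Theorem~\ref{maj} reparameterizes in the opposite direction. Corollary~\ref{coro:column}, and hence the paper's proof, only gives conditional probability $1-Ce^{-u^\alpha}$. The bound $1-3\exp(-u^2)$ in the statement is not supported by the paper's argument, so reading it with exponent $u^\alpha$ is the right call.
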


\begin{proof}

We first introduce a high-probability event on which all columns of $A$ have Euclidean norms bounded away from zero. Conditioning on this event, we verify that the normalized matrix $\widetilde A$ has independent, mean-zero, subgaussian columns with controlled $\psi_2$-norm, and then invoke Corollary~\ref{coro:column}.

\medskip
\noindent
\textit{Step 1: a uniform lower bound on column norms.}
Let $A_1,\dots,A_n$ denote the columns of $A$, and define
\[
F \;:=\; \Bigl\{ \min_{1\le i\le n}\|A_i\|_2 \ge \tfrac{\sqrt m}{2} \Bigr\}.
\]
By Theorem \ref{BB}, for each $i\in\{1,\dots,n\}$ one has
\[
\bigl\|\|A_i\|_2-\sqrt m \bigr\|_{\psi_\alpha}\;\le\; C(\alpha) K^2,
\]
and therefore
\[
\textsf P\Bigl(\|A_i\|_2\le\tfrac{\sqrt m}{2}\Bigr)
=\textsf P\Bigl(\|A_i\|_2-\sqrt m\le -\tfrac{\sqrt m}{2}\Bigr)
\le 2\exp\!\Bigl(-c(\alpha)\,\frac{m^{\alpha/2}}{K^{2\alpha}}\Bigr).
\]
Applying the union bound yields
\[
\textsf P(F)
\;\ge\; 1-2n\exp\!\Bigl(-c(\alpha)\,\frac{m^{\alpha/2}}{K^{2\alpha}}\Bigr)
\;=\; 1-2\exp\!\Bigl(\log n-c(\alpha)\,\frac{m^{\alpha/2}}{K^{2\alpha}}\Bigr).
\]
Under the standing assumption $m\ge C(\alpha) K^4(\log n)^{2/\alpha}$, the $\log n$ term can be absorbed into the constant, and hence
\[
\textsf P(F)\;\ge\; 1-2\exp\!\Bigl(-c(\alpha)\,\frac{m^{\alpha/2}}{K^{2\alpha}}\Bigr)\;\ge\;\tfrac12.
\]

\medskip
\noindent
\textit{Step 2: conditional properties of the normalized columns.}
On the event $F$ the columns are nonzero, so we may define the column-normalized matrix $\widetilde A$ by
\[
\widetilde A_i \;:=\; \frac{\sqrt m}{\|A_i\|_2}\,A_i,\qquad i=1,\dots,n.
\]
Clearly, $\|\widetilde A_i\|_2=\sqrt m$ for all $i$. Moreover, conditioning on $F$ gives
\begin{equation}\label{eq:psi2-bound}
\|\widetilde A_i\mid F\|_{\psi_\alpha}
=\Bigl\|\frac{\sqrt m}{\|A_i\|_\alpha}A_i \,\Bigm|\, F\Bigr\|_{\psi_\alpha}
\le 2\,\|A_i\mid F\|_{\psi_\alpha}
\le C\,\|A_i\|_{\psi_\alpha}.
\end{equation}
The first inequality uses $\sqrt m/\|A_i\|_2\le 2$ on $F$. The second inequality follows from the fact that conditioning on an event of probability at least $1/2$ inflates the $\alpha$-subexponential norm by at most an absolute constant.
Consequently, by the assumption $\|A_i\|_{\psi_\alpha}\le K$, we have 
\[
\|\widetilde A_i\mid F\|_{\psi_\alpha}\;\le\; C K.
\]

\medskip
\noindent
\textit{Step 3: conditional centering and independence.}
Since the entries of $A_i$ are symmetric, their signs are independent of their magnitudes; hence, after conditioning on $F$ (which only depends on $\|A_i\|_2$), the conditional distribution of $\widetilde A_i$ remains symmetric. In particular,
\[
\textsf E[\widetilde A_i\mid F]=0.
\]
Finally, $F$ factorizes across columns (it is the intersection of events depending on individual columns), and the columns of $A$ are independent. Therefore the columns of $\widetilde A$ remain independent conditional on $F$.

\medskip
\noindent
\textit{Conclusion.}
Conditional on $F$, the columns $\widetilde A_1,\dots,\widetilde A_n$ are independent, mean-zero, subgaussian vectors satisfying
\[
\|\widetilde A_i\|_2=\sqrt m,\qquad \|\widetilde A_i\mid F\|_{\psi_\alpha}\le CK,\qquad \textsf E[\widetilde A_i\mid F]=0.
\]
Thus we are precisely in the setting of Corollary~\ref{coro:column} with $\lambda=\sqrt m$, and the desired claim follows.
\end{proof}

\textbf{Acknowledgment}
The work of Xuanang Hu (corresponding author) and Hanchao Wang were supported by the National Key R\&D Program of China (No.2024YFA1013501), the National Natural Science Foundation of China (No. 12571162), and Shandong Provincial Natural Science Foundation (No. ZR2024MA082). The work of Vladimir V. Ulyanov was conducted within the framework of the HSE University Basic Research Programs and within the program of the Moscow Center for Fundamental and Applied Mathematics, Lomonosov Moscow State University.

\textbf{Author contributions}

These authors contributed equally to this work.

\textbf{Data availability}

No data was used for the research described in the article.

\printbibliography

\end{document}